\renewcommand\Re{\mathop{{\rm Re}}}
\newcommand{\Db}{{\mathbb D}}
\newcommand{\Dbar}{{\overline{\Db}}}
\newcommand{\Cb}{{\mathbb C}}
\newcommand{\Nb}{{\mathbb N}}
\newcommand{\Rb}{{\mathbb R}}
\newcommand{\calD}{\mathcal{D}}
\newcommand{\calL}{\mathcal{L}}
\newcommand{\calS}{\mathcal{S}}
\def\colvec[#1,#2]{\begin{bmatrix} #1 \\ #2 \end{bmatrix}}
\def\rowvec[#1,#2]{\begin{bmatrix} #1 & #2 \end{bmatrix}}
\def\ip<#1,#2>{\left\langle #1,#2 \right\rangle}
\newcommand{\norm}[1]{\left\Vert #1 \right\Vert}
\newcommand{\cl}{\mathop{\mathrm{cl}}}
\newcommand{\degree}{\mathop{\mathrm{degree}}}
\newcommand{\Der}{\mathop{\mathrm{Der}}}
\newcommand{\Hom}{\mathop{\mathrm{Hom}}}
\newcommand{\HT}{\mathop{\mathcal{H}}}
\newcommand{\pv}{\mathop{\mathrm{p.v.}}}
\newcommand{\trace}{\mathop{\mathrm{trace}}}
\newcommand{\st}{\,:\,}
\newtheorem{thm}{Theorem}[section]
\newtheorem*{thm*}{Theorem}
\newtheorem{cor}[thm]{Corollary}
\newtheorem*{cor*}{Corollary}
\newtheorem{lem}[thm]{Lemma}
\newtheorem{prop}[thm]{Proposition}
\newtheorem*{con*}{Conjecture}
\newtheorem*{prob*}{Problem}
\theoremstyle{definition}
\newtheorem{defn}[thm]{Definition}
\theoremstyle{remark}
\newtheorem{rem}[thm]{Remark}
\newtheorem{ex}[thm]{Example}
\numberwithin{equation}{section}
\newcommand{\idcom}[1]{\footnote{\textcolor[rgb]{0.98,0.00,0.00}{ID: #1}}}
\begin{document}


\title{Linear systems, Hankel products and the sinh-Gordon equation}

\author{Gordon Blower}
\email{g.blower@lancaster.ac.uk}
\address{Department of Mathematics and Statistics, Lancaster
University, Lancaster LA1 4YF, United Kingdom}
\author[Ian Doust]{Ian Doust}
\email{i.doust@unsw.edu.au}
\address{School of Mathematics and Statistics, University of New South Wales, Sydney, NSW 2052, Australia}

\subjclass[2010]{}

\date{13 October 2022}

\begin{abstract} 
Let $(-A,B,C)$ be a linear system in continuous time $t>0$ with input and output space ${\mathbb C}^2$ and state space $H$. The scattering functions $\phi_{(x)}(t)=Ce^{-(t+2x)A}B$ determines a Hankel integral operator $\Gamma_{\phi_{(x)}}$; if $\Gamma_{\phi_{(x)}}$ is trace class, then the Fredholm determinant $\tau (x)=\det (I+\Gamma_{\phi_{(x)}})$ determines the tau function of $(-A,B,C)$. The paper establishes properties of algebras including $R_x=\int_x^\infty e^{-tA}BCe^{-tA}dt$ on $H$. Thus the paper obtains solutions of the sinh-Gordon PDE. The tau function for sinh-Gordon satisfies a particular Painl\'eve $\mathrm{III}'$ nonlinear ODE and describes a random matrix model, with asymptotic distribution found by the Coulomb fluid method to be the solution of an electrostatic variational problem on an interval.   
\end{abstract}

\maketitle

Key words: Sinh-Gordon equation, tau function, linear systems, Howland operators \par
AMS Classification: 47B35, 47A48, 34M55\par

\section{Introduction}\label{S:Intro}
This paper is concerned with the Fredholm determinants of operators that are introduced via linear systems and with their applications to the sinh-Gordon equations. The study has application in Tracy and Widom's approach \cite{TW} to random matrix theory. 

We begin by fixing some notation concerning linear systems. 
Let $H$ be a separable complex Hilbert space with orthonormal basis $(e_j)_{j=0}^\infty$, and let ${\mathcal L}(H)$ denote the algebra of bounded linear operators on $H$. We shall denote the adjoint of $B \in \calL(H)$ by $B^\dagger$. Let $H_0$ be a separable complex Hilbert space which serves as the input and output space; let $B:H_0\rightarrow H$ and $C:H\rightarrow H_0$ be bounded linear operators. 
On the state space $H$, let $(T_t)_{t>0}$ be a strongly continuous and bounded semigroup with infinitesimal generator $-A$, which is densely defined on domain ${\mathcal D}(A)$, where ${\mathcal D}(A)$ is itself a Hilbert space for the graph norm 
$\Vert f\Vert^2_{{\mathcal{D}}(A)} =\Vert f\Vert^2+\Vert Af\Vert^2$.  

The continuous time linear system  $(-A,B,C)$ is
\begin{align}\label{linearsystem}
  {\frac{dX}{dt}} &=-AX+BU\nonumber\\
            Y     &= CX,\nonumber \\
          X(0)    &= 0.
    \end{align}
The scattering function of $(-A,B,C)$ is $\phi (t) = Ce^{-tA}B$, which is a bounded and weakly continuous function $\phi :(0, \infty ) \rightarrow {\calL}(H_0)$.

Suppose that $\phi\in L^2((0, \infty );{\mathcal L}(H_0))$. Then the Hankel integral operator  with scattering function 
$\phi$ is the operator
  \begin{equation}\label{eq2.29}
    \Gamma_\phi f(x) =
       \int_0^\infty \phi(x+y) f(y) \,dy
              \qquad (f \in L^2((0,\infty); H_0)).
  \end{equation}

Such Hankel operators do not themselves form an algebra, although they have an algebraic structure which has been exploited in \cite{Po} and section 3.5 of \cite{McK}. An important fact is that every bounded self-adjoint Hankel integral operator on $L^2(0, \infty )$ can be realised as the Hankel operator associated with a linear system $(-A,B,C)$ in continuous time with state space $H$, and such that $\phi (t)=Ce^{-tA}B$ where $(e^{-tA})$ is a strongly continuous semigroup on $H$ (see \cite{Pe} and \cite{MPT}).
 
Recall that $K \in \calL(H)$ is Hilbert--Schmidt if
$\norm{K}_{{\calL}^2}^2 = \sum_{j=0}^\infty \norm{ Ke_j }^2$
 is finite. We shall denote the set of all Hilbert--Schmidt operators on $H$ by ${\calL}^2 = {\calL}^2(H)$. This space contains the ideal of trace class operators ${\calL}^1=\{ K_1 K_2 \st K_1, K_2\in {\calL}^2\}$. 
There are now several criteria for ensuring the boundedness of a Hankel operator; see, for example \cite{Pe} and \cite{Power}. We shall repeatedly use the basic result that  
if $t\Vert \phi (t)\Vert^2_{{\mathcal L}^2(H_0,H_0)}$ is integrable, then $\Gamma_\phi$ determines a Hilbert--Schmidt operator $L^2((0,\infty ); H_0)\rightarrow L^2((0, \infty ); H_0)$. 

 \begin{defn} Let $(-A,B,C)$ be a linear system as in (\ref{linearsystem}) with scattering function $\phi$, and suppose that $\Gamma_\phi$ is trace class.  Let $\phi_{(x)}(t)=\phi (t+2x)$. 
 Then the tau function of $(-A,B,C)$ is defined to be  $\tau(x)=\det (I+\Gamma_{\phi_{(x)}})$, for $x > 0$.
 \end{defn}
 
 This tau function is analogous to the tau function introduced by Jimbo, Miwa and Ueno \cite{JMU} to describe the isomonodromy of rational differential equations, and generalizes the classical theta function. Mumford \cite{M} constructed solutions of the cubic nonlinear Schr\"odinger, $KdV$, $mKdV$ and sine-Gordon equations in terms of classical theta functions on abelian varieties. Ercolani and McKean \cite{EMcK} extended the analysis of $KdV$ to infinite-dimensional abelian varieties in which case the scattering function $\phi$ is of rapid decay. 

In \cite{BN}, we reinterpreted the results of \cite{Po} and \cite{McK} in terms of an algebra of operators on $H$ with a special associative product, and showed how nonlinear differential equations such as $KdV$ emerge from algebraic identities in this product. In the present paper, we continue this analysis by addressing the sinh-Gordon equation in section~\ref{S:Sinh-Gordon}.
The simplest Darboux transformation of a linear systems is $(-A,B,C)\mapsto (-A,B,-C)$ which takes $R\mapsto -R$. In section \ref{S:Sinh-Gordon}, we introduce a $2\times 2$ block matrix system which enables use to introduce a tau function for the sinh-Gordon equation in the form $\det (I+\Gamma_{\phi_{(x)}})(I-\Gamma_{\phi_{(x)}})^{-1}$.\par
\indent

In \cite{DIZ}, the authors consider the algebra of integrable operators of the form $\lambda I+K$ where $K$ is an integral operator on $L^2(L)$ for a curve $L$ in ${\mathbb C}$ with kernel
  \[
  k_K(z,w)={\frac{\sum_{j=1}^N f_j(z)g_j(w)}{z-w}}
  \]
where $\sum_{j=1}^N f_j(z)g_j(z)=0$. They achieve several results using Riemann--Hilbert theory and make applications to some integrable operators in random matrix theory. Integrable operators of this form include the Christoffel--Darboux kernel from orthogonal polynomials when $L$ is a real subinterval. In \cite{TW}, the authors systematically consider these kernels and their scaling limits for classical orthogonal polynomials and obtain integrable operators that are fundamental to random matrix theory. In this application, the Fredholm determinant $\det (I+\lambda K)$ is of primary interest. In particular, they show that the Airy and Bessel kernels can be expressed as products of Hankel integral operators on $L^2(0, \infty )$. 

In section \ref{S:Hankel products} we give sufficient conditions $T\in {\mathcal A}$ to be expressed as a product of Hankel integral operators.

\section{Differential rings and Darboux addition}\label{S:Diff-rings}

Let $u\in C^\infty ({\mathbb R}; {\mathbb C})$ be the potential in Schr\"odinger's equation. Gelfand and Dikii \cite{GD} considered the algebra ${\mathcal A}={\mathbb C}[u, u', \dots ]$ of differential polynomials generated by the potential and its derivatives. In this section we consider a generalization of this relating to linear systems. In \cite{MPT} there is an existence theory covering the self-adjoint case. 

We suppose that $\phi$ is a scalar scattering function that can be realized from a linear system $(-A,B,C)$ from (\ref{linearsystem}) as $\phi (t)=Ce^{-tA}B$, where $A:H\rightarrow H$, $B:{\Cb} \rightarrow H$ and 
$C:H\rightarrow {\Cb}$ are bounded. Suppose further that $\Xi_x, \Theta_x :L^2((0, \infty ); H_0^2)\rightarrow L^2((0, \infty );{\mathbb C})$ are Hilbert--Schmidt operators, where
\begin{align}\label{controllability}
    \Xi_x f &= \int_x^\infty e^{-tA}Bf(t)\, dt\\
  \Theta_x f&= \int_x^\infty e^{-tA^\dagger}C^\dagger f(t)\, dt.
\end{align}
Then 
\begin{equation}\label{R}
   R_x = \Xi_x\Theta^\dagger_x
        = \int_x^\infty e^{-tA}BCe^{-tA} \,dt
\end{equation}
is trace class, and satisfies 
the Lyapunov equation
\begin{equation}\label{Lyapunov}
   {\frac{d}{dx}}R_x=-AR_x-R_xA.
\end{equation}

\begin{defn}  The tau function of $(-A,B,C)$ is $\tau (x)=\det (I+R_x)$.
\end{defn}

 With $F_x=(I+R_x)^{-1}$,  (\ref{Lyapunov}) gives
  \[ (d/dx)F_x=AF_x+F_xA-2F_xAF_x=e^{-xA}BCe^{-xA}. \]
 We introduce the vector space of linear operators
\begin{equation}\label{diffring}
   {\mathcal A}_{\Sigma} =
     {\hbox{span}}_{\Cb} \{ A^{n_1}, A^{n_2}F_xA^{n_2}\dots F_xA^{n_r}; n_j\in {\Nb}\},
\end{equation}
and introduce the associative product
\begin{equation}\label{ast}
   P\ast Q = P(A F_x + F_x A - 2 F_x A F_x)Q\qquad (P,Q \in {\mathcal A}_\Sigma )
\end{equation}
and the derivation
\begin{equation}\label{partial}
   \partial P =A(I-2F_x)P+{\frac{dP}{dx}}+P(I-2F_x)A \qquad (P\in {\mathcal A}_\Sigma ).
\end{equation}
and the bracket operation
 \begin{equation}\label{bracket}
  \bigl\lfloor P\bigr\rfloor 
      = Ce^{-xA}F_xPF_xe^{-xA}B,\qquad (P\in {\mathcal A}_\Sigma )
 \end{equation}
as in \cite{BN} Definition 4.3.

\begin{lem}\label{diffringlemma} 
\begin{enumerate}
    \item[(i)] $({\mathcal A}_\Sigma , \ast, \partial )$ is a differential ring, and the bracket operation  $\lfloor \, \cdot \rfloor : ({\mathcal A}_\Sigma, \ast , \partial )\rightarrow (C^\infty ((0, \infty ); \Cb), \cdot , d/dx)$ gives a homomorphism of differential rings.
    \item[(ii)] Let the potential of the linear system $(-A,B,C)$ be 
     \[ u=-2 \, \frac{d^2}{dx^2} \log\det (I+R_x). \] 
    Then $u=-4\lfloor A\rfloor$.
\end{enumerate}
\end{lem}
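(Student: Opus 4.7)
The plan is to first verify the ring and derivation structure on $\mathcal{A}_\Sigma$, then check that the bracket is multiplicative and intertwines $\partial$ with $d/dx$, and finally derive (ii) by Jacobi's formula. The whole argument rests on two identities. The first, $M := AF_x + F_xA - 2F_xAF_x = dF_x/dx$, is the formula already stated in the text. The second, derived by writing $dF_x/dx = -F_x(dR_x/dx)F_x$ together with $dR_x/dx = -e^{-xA}BCe^{-xA}$, is
\[
M = F_x\, e^{-xA}BCe^{-xA}\, F_x,
\]
so the insertion $AF_x+F_xA-2F_xAF_x$ is just $F_x$-conjugation of the rank-one semigroup kernel; this is the bridge between the two sides of $\lfloor\cdot\rfloor$.

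Closure of $\mathcal{A}_\Sigma$ under $\ast$ is immediate: $P\ast Q = PMQ$ and each term of $M$ has the form $A^iF_x^\varepsilon A^j$, so $PMQ$ remains in the span (\ref{diffring}). Associativity of $\ast$ is automatic because it amounts to ordinary composition with a fixed operator inserted. For the Leibniz rule $\partial(P\ast Q) = \partial P\ast Q + P\ast \partial Q$, I expand both sides using $P\ast Q = PMQ$ and the product rule for $d/dx$; after the obvious cancellations the claim reduces to
\[
\frac{dM}{dx} = (I-2F_x)AM + MA(I-2F_x),
\]
which follows by differentiating $M = AF_x+F_xA-2F_xAF_x$ and reusing $dF_x/dx = M$.

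For the bracket, multiplicativity follows by inserting $I = e^{xA}e^{-xA}$ between the two scalar factors:
\[
\lfloor P\rfloor\lfloor Q\rfloor = Ce^{-xA}F_xPF_x\,e^{-xA}BCe^{-xA}\,F_xQF_xe^{-xA}B = Ce^{-xA}F_xPMQF_xe^{-xA}B = \lfloor P\ast Q\rfloor,
\]
where the middle factor collapses by the second identity above. Intertwining $(d/dx)\lfloor P\rfloor = \lfloor\partial P\rfloor$ is a direct differentiation of $Ce^{-xA}F_xPF_xe^{-xA}B$: the two boundary derivatives produce $-AF_xPF_x$ and $-F_xPF_xA$, while the two interior derivatives produce $MPF_x$ and $F_xPM$; expanding $M = AF_x+F_xA-2F_xAF_x$ and using $Ae^{-xA} = e^{-xA}A$, the five terms telescope into exactly $Ce^{-xA}F_x[A(I-2F_x)P + dP/dx + P(I-2F_x)A]F_xe^{-xA}B$.

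For part (ii), Jacobi's formula gives
\[
\frac{d}{dx}\log\det(I+R_x) = \trace\bigl[F_x\,dR_x/dx\bigr] = -\trace\bigl[F_xe^{-xA}BCe^{-xA}\bigr] = -Ce^{-xA}F_xe^{-xA}B,
\]
using cyclicity of the trace and the rank-one structure of $BC$. Differentiating once more, the $-AF_x$ and $-F_xA$ boundary contributions cancel the $AF_x+F_xA$ portion of $M = dF_x/dx$, leaving only the $-2F_xAF_x$ piece, so
\[
\frac{d^2}{dx^2}\log\det(I+R_x) = 2\,Ce^{-xA}F_xAF_xe^{-xA}B = 2\lfloor A\rfloor,
\]
and thus $u = -4\lfloor A\rfloor$. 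The main obstacle is bookkeeping rather than any single idea: the Leibniz step and the intertwining of $\lfloor\cdot\rfloor$ with $\partial$ each demand many terms to cancel in the correct pattern, and one must consistently distinguish the kernel $e^{-xA}BCe^{-xA}$ from its $F_x$-contraction $M$.
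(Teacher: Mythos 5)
Your proof is correct: all the key identities check out, namely $M:=AF_x+F_xA-2F_xAF_x=F_xe^{-xA}BCe^{-xA}F_x$, the Leibniz reduction to $M'=(I-2F_x)AM+MA(I-2F_x)$, the telescoping in the intertwining computation, and the cancellation leaving $-2F_xAF_x$ in part (ii). The paper does not prove this lemma at all but simply cites Lemma 4.2 of the reference \cite{BN}, so your argument supplies a self-contained verification consistent with what the paper defers; note also that you have implicitly corrected the paper's displayed formula $(d/dx)F_x=\cdots=e^{-xA}BCe^{-xA}$, where the right-hand side should carry the $F_x$-conjugation, $F_xe^{-xA}BCe^{-xA}F_x$, exactly as your second identity states.
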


\begin{proof} See \cite{BN} Lemma 4.2, and a related approach is developed in \cite{DMS}. 
\end{proof}

\begin{defn} \cite{GH} 
\begin{enumerate}
\item[(i)] For a potential $u\in C^\infty ({\mathbb R}; {\mathbb C})$ the stationary $KdV$ hierarchy for a sequence $(f_\ell )$ is the recurrence relation $f_0=1,$
\begin{equation}\label{KdVrecurrence}
  {\frac{\partial f_\ell}{\partial x}}
   = -{\frac{1}{4}}{\frac{\partial^3 f_{\ell-1}}{\partial x^3}}
       +u{\frac{\partial f_{\ell-1}}{\partial x}}
       +{\frac{1}{2}} {\frac{\partial u}{\partial x}}f_{\ell -1}
       \qquad (\ell = 1,2,3,\dots).
\end{equation}
\item[(ii)] The stationary hierarchy for the sine-Gordon and modified $KdV$ equation is similar, with $u$ replaced by \begin{equation}\label{mKdVrecurrence}
    w_+=-{\frac{1}{4}}\Bigl( \Bigl({\frac{\partial u}{\partial x}}\Bigr)^2+2i{\frac{\partial^2u}{\partial x^2}}\Bigr).
\end{equation} 
\end{enumerate}
\end{defn}

The solutions sequence for these hierarchies are differential polynomials in $u$ by \cite{GH} Remark 2.2, and by Lemma \ref{diffringlemma} there exists $X_j\in {\mathcal A}_\sigma$ such that $f_\ell =\lfloor X_\ell\rfloor $. For the stationary $KdV$ hierarchy, we showed in \cite{BN} that when $f_0=(1/2)u$, the terms $f_\ell=(-1)^\ell 2\lfloor A^{2\ell-1}\rfloor$ give a solution.  In the physical models discussed in \cite{HMNP}, $KdV$ has a partition function which is the square of a tau function; whereas $mKdV$ has a partition function that is the product of tau functions; for sinh-Gordon, one considers quotients of tau functions.

Suppose that $u\in C^\infty (\Rb; \Rb)$ is bounded and let $\lambda_0$ be the bottom of the spectrum of $L=-d^2/dx^2+u$. For $\lambda < \min \{\lambda_0, 0\}$ and signs $\{ \pm \}$, we consider solutions:
\begin{enumerate}
 \item[($+$)] $h_{+}(x; \lambda )$ of $Lh_+=\lambda h_+$ such that $h_+\geq 0$, with
 \begin{equation}\int_0^\infty h_+(x)^2\,dx < \infty, \quad {\hbox{and}}\quad  \int_{-\infty}^0h_+(x)^2\, dx = \infty ;\end{equation}
 \item[($-$)] $h_{-}(x; \lambda )$ of $Lh_-=\lambda h_-$ such that $h_-\geq 0$, with
\begin{equation}\int_{-\infty}^0 h_-(x)^2\,dx < \infty\quad{\hbox{and}}\quad \int_0^{\infty}h_-(x)^2\,dx = \infty.\end{equation}
\end{enumerate}

Let $G(x,y;\lambda )$ be the Greens integral kernel that implements $(\lambda I-L)^{-1}$ on $L^2({\mathbb R}; {\mathbb C}),$ so the diagonal of $G$ satisfies 
\begin{equation}\label{Green}
   G(x,x;\lambda ) = {\frac{h_+(x, \lambda )h_-(x\, \lambda )} {{\hbox{Wr}}\,(h_+ (x, \lambda ), h_-(x, \lambda ))}}.
\end{equation}

In the case where $u$ is a real continuous and $2\pi$-periodic potential, the differential equation $-h''+uh=\lambda h$ is known as Hill's equation, and is associated with a hyperelliptic spectral curve $\Sigma$ with points $\{ (\lambda, \pm ): \lambda \in {\mathbb C}\}$ giving a two-sheeted cover of ${\mathbb C}$ as in \cite{MvM}. The notion of Darboux addition refers to the operation on potentials that corresponds to the addition rule for pole divisors of Baker-Akhiezer functions on $\Sigma$, as discussed in \cite{McK3}. In our case, we have a scattering potential $u$, and we consider the corresponding addition rule on the potentials, without seeking to interpret directly the notion of the spectral curve. Nevertheless, there is a simple operation on the linear system $(-A,B,C)$ that gives rise to Darboux addition on the associated potential from Lemma \ref{diffringlemma} (ii).   

\begin{defn} (Darboux addition) 
For $\sigma \in \{+,-\}$, we define $e(x,\lambda,\sigma) = h_\sigma(x,\lambda)$. 
Associated with $(\lambda, \sigma )$ there is a change in the potential
  \[ T^{(\lambda , \sigma )}:u\mapsto u-2{\frac{d^2}{dx^2}} \log e(x,\lambda, \sigma ).\]
corresponding to the change of linear systems
\[ T^{(\zeta,\sigma)}(-A,B,C)
     = (-A,(\zeta I + \sigma A)(\zeta I - \sigma A)^{-1} B,C).\]
\end{defn}

Let ${\hbox{Wr}}\,(f,g)=fg'-f'g$ be the Wronskian determinant of $f$ and $g$. Then for $(\lambda_1, \sigma_1)$ and $(\lambda_2, \sigma_2)$, the effect of successive transformations is
\begin{equation}\label{comp}T^{(\lambda_2, \sigma_2) }T^{(\lambda_1, \sigma_1)}: u\mapsto u-2{\frac{d^2}{dx^2}} \log {\hbox{Wr}}\,(e(x,\lambda_2, \sigma_2), e(x,\lambda_1, \sigma_1)).\end{equation}
See \cite{BN} Corollary 3.6 and \cite{McK2}. In particular, with $p_1=(\lambda , -)$ followed by $p_2=(\lambda +\delta \lambda , +)$, we have an infinitesimal addition $u\mapsto u-\delta \lambda {\bf X}(u)+o(\delta\lambda )$. We proceed to compute the generating function for infinitesimal addition. 

\begin{prop} Suppose that the linear system $(-A,B,C)$ has potential $u=-4\lfloor A\rfloor$. Then the infinitesimal addition satisfies
\begin{equation}\label{diaggreen} {\bf X}(u)=-2{\frac{d}{dx}}G(x,x; \lambda )\end{equation}
or equivalently
\begin{equation}
  {\bf X}(u) =
{\frac{-2}{\sqrt{-\lambda }}}\bigl\lfloor A(I-2F)A(\lambda I+A^2)^{-1} +A(\lambda I+A^2)^{-1}(I-2F)A\bigr\rfloor
       \qquad (\lambda <\lambda_0).\end{equation}
\end{prop}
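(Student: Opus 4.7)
The plan is to prove the two displayed equalities in turn, starting from the composition formula (\ref{comp}) with $(\lambda_1,\sigma_1) = (\lambda,-)$ and $(\lambda_2,\sigma_2) = (\lambda+\delta\lambda, +)$:
\[
u \mapsto u - 2\frac{d^2}{dx^2}\log \mathrm{Wr}\bigl(h_+(x,\lambda+\delta\lambda),\, h_-(x,\lambda)\bigr).
\]
Since $h_\pm(\cdot,\lambda)$ are both solutions of $Lh = \lambda h$, the Wronskian $W_0(\lambda)$ at $\delta\lambda = 0$ is constant in $x$, so the $\delta\lambda$-independent term contributes nothing to $d^2/dx^2$. Expanding $h_+(x,\lambda+\delta\lambda) = h_+(x,\lambda) + \delta\lambda\,\partial_\lambda h_+(x,\lambda) + O(\delta\lambda^2)$ and then expanding the logarithm to first order in $\delta\lambda$, comparison with the defining relation $u\mapsto u - \delta\lambda\,\mathbf{X}(u)+o(\delta\lambda)$ gives
\[
\mathbf{X}(u) = \frac{2}{W_0(\lambda)}\frac{d^2}{dx^2}\mathrm{Wr}\bigl(\partial_\lambda h_+(x,\lambda),\, h_-(x,\lambda)\bigr),
\]
up to the sign fixed by the Darboux ordering convention.

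Next, differentiating $Lh_+ = \lambda h_+$ in $\lambda$ gives $(\partial_\lambda h_+)'' = (u-\lambda)\partial_\lambda h_+ - h_+$. Substituting this together with $h_-'' = (u-\lambda)h_-$ into the Wronskian derivative yields
\[
\frac{d}{dx}\mathrm{Wr}(\partial_\lambda h_+, h_-) = (\partial_\lambda h_+)h_-'' - (\partial_\lambda h_+)'' h_- = h_+ h_-.
\]
Combining this with (\ref{Green}) gives $(d^2/dx^2)[\mathrm{Wr}(\partial_\lambda h_+,h_-)/W_0] = (d/dx)G(x,x;\lambda)$, which delivers (\ref{diaggreen}).

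For the equivalent operator expression, I would translate the diagonal Green's function into the bracket calculus of Lemma \ref{diffringlemma}. Since $u = -4\lfloor A\rfloor$ and the derivation $\partial P = A(I-2F_x)P + dP/dx + P(I-2F_x)A$ of (\ref{partial}) implements $d/dx$ under the bracket homomorphism, the symmetrized expression
\[
A(I-2F_x)A(\lambda I + A^2)^{-1} + A(\lambda I+A^2)^{-1}(I-2F_x)A
\]
should mirror the $h_+' h_- + h_+ h_-'$ split of $d(h_+ h_-)/dx$: each summand corresponds to differentiating one of the two factors $h_\pm$, with the resolvent $(\lambda I + A^2)^{-1}$ playing the role of $(L-\lambda)^{-1}$ inside the bracket and the $1/\sqrt{-\lambda}$ prefactor absorbing the free-space normalization of $W_0$.

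The main obstacle is precisely this operator-level identity relating $G(x,x;\lambda)$ to $(\lambda I + A^2)^{-1}$, since the framework does not a priori supply a direct correspondence between the Jost solutions $h_\pm$ and the data $A,B,C,F_x$. To close the argument I would either appeal to a Gelfand--Levitan type factorization of $(I+R_x)^{-1}$ in terms of $h_\pm$, or verify the identity term-by-term as a formal series in $\lambda^{-1/2}$ as $\lambda\to-\infty$ by matching against the stationary KdV densities $f_\ell = (-1)^\ell 2\lfloor A^{2\ell-1}\rfloor$ recalled just before the proposition; uniqueness of the formal expansion at $\lambda = -\infty$ would then yield equality on the appropriate dense subset.
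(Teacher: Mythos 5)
Your derivation of the first identity is sound and follows the same route as the paper: the paper applies the composition formula (\ref{comp}) and the Wronskian identity (\ref{Wronsk}), namely $\frac{d}{dx}\mathrm{Wr}(h_+(x,\lambda+\delta\lambda),h_-(x,\lambda))=\delta\lambda\, h_+(x,\lambda+\delta\lambda)h_-(x,\lambda)$, then lets $\delta\lambda\to 0$ and invokes (\ref{Green}); your version, which differentiates $h_+$ in $\lambda$ first and then computes $\frac{d}{dx}\mathrm{Wr}(\partial_\lambda h_+,h_-)=h_+h_-$, is the first-order term of exactly the same computation. (Both you and the paper leave a sign to the ordering convention; that is not a substantive difference.)

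The genuine gap is in the second identity, and you have correctly located it yourself: nothing in your argument actually establishes the link between the diagonal Green's function of $L=-d^2/dx^2+u$ and the resolvent $(\lambda I+A^2)^{-1}$ in the bracket calculus. The paper does not derive this from scratch either; it quotes Theorem 5.4 of \cite{BN}, which supplies the expansion
\begin{equation*}
G(x,x;\lambda)=\frac{1}{\sqrt{-\lambda}}\Bigl(\frac{1}{2}-\frac{\lfloor A\rfloor}{\lambda}+\frac{\lfloor A^3\rfloor}{\lambda^2}-\frac{\lfloor A^5\rfloor}{\lambda^3}+\dots\Bigr)
\qquad (\lambda\to-\infty),
\end{equation*}
and then finishes with two short steps that you should make explicit if you want to close your argument along your second suggested route: (a) the derivative rule $\frac{d}{dx}\lfloor A^{2m+1}\rfloor=\lfloor A(I-2F)A^{2m+1}+A^{2m+1}(I-2F)A\rfloor$ from the differential-ring homomorphism of Lemma \ref{diffringlemma}, applied termwise; and (b) summation of the resulting geometric series $\sum_{m\ge 0}(-1)^m A^{2m}\lambda^{-m-1}=(\lambda I+A^2)^{-1}$ inside the bracket. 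Your proposal to "match against the stationary KdV densities $f_\ell=(-1)^\ell 2\lfloor A^{2\ell-1}\rfloor$ as a formal series at $\lambda=-\infty$" is precisely the content of the cited theorem, so the strategy is right; but as written it is a plan rather than a proof, and the uniqueness of the asymptotic expansion and the validity of termwise differentiation and resummation are the points that would need to be justified (or replaced by the citation). Your first suggested route, via a Gelfand--Levitan factorization of $(I+R_x)^{-1}$ in terms of the Jost solutions, is not what the paper does and would require building that correspondence from scratch.
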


\begin{proof} We begin by recalling McKean's calculation. By the composition formula (\ref{comp}), the infinitesimal addition satisfies
\begin{align}\label{comp2}u-\delta \lambda {\bf X}(u)+o(1)&=u-2{\frac{d^2}{dx^2}}\log {\hbox{Wr}}\,(h_+(x, \lambda +\delta\lambda ),h_-(x,\lambda ))\end{align}
where from the differential equation
\begin{equation}\label{Wronsk}{\frac{d}{dx}} {\hbox{Wr}}\,(h_+(x, \lambda +\delta\lambda ),h_-(x,\lambda ))=\delta \lambda h_+(x, \lambda +\delta\lambda )h_-(x, \lambda ).\end{equation}

Combining (\ref{comp2}), (\ref{Wronsk}) and (\ref{Green}), and taking $\delta\lambda\rightarrow 0$, one can express the infinitesimal transformation in terms of the diagonal of the Greens function as in (\ref{diaggreen}).
In \cite{BN} Theorem 5.4, we computed this diagonal Greens function, and we deduce
\[
  {\bf X}(u)  = {\frac{-2}{\sqrt{-\lambda }}}{\frac{d}{dx}}\Bigl( {\frac{1}{2}}-{\frac{\lfloor A\rfloor}{\lambda}}+ {\frac{\lfloor A^3\rfloor}{\lambda^2}}-
{\frac{\lfloor A^5\rfloor}{\lambda^3}}+\dots \Bigr)\qquad (\lambda\rightarrow-\infty )
\]
where the summands of this series are
  \[ {\frac{d}{dx}}\lfloor A^{2m+1}\rfloor =\lfloor A(I-2F)A^{2m+1}+A^{2m+1}(I-2F)A\rfloor \]
as in (5.4) of \cite{BN}. Hence by summing the resulting geometric series, we have
\[ {\bf X}(u) = {\frac{-2}{\sqrt{-\lambda }}}\bigl\lfloor A(I-2F)A(\lambda I+A^2)^{-1} +A(\lambda I+A^2)^{-1}(I-2F)A\bigr\rfloor.\]
\end{proof}

%
%

\section{Products of Hankel operators}\label{S:Hankel products}

In what follows we shall require some of the concepts of abstract differential calculus as developed in \cite{CQ}. 
Let $S$ be an associative and unital complex algebra. There is a natural multiplication map $\mu: S \otimes S \to S$, $\mu\bigl(\sum a_j \otimes b_j\bigr) = \sum a_j b_j$. We shall denote the nullspace of this map as $\Omega^1 S$ and so we have an exact sequence  
 \begin{equation}\label{Omega1} 0\longrightarrow \Omega^1 S
       \longrightarrow S\otimes S
       \longrightarrow S
       \longrightarrow 0;
 \end{equation}
In this section, we shall mainly be interested in the case where $S=\Cb(x)$, the space of rational functions. In these cases, the tensor product $S \otimes S$ may be regarded as consisting of functions of the form $\sum f_j(x) g_j(y)$ acting on a product space. There are several natural derivation maps that act on these spaces.

\begin{defn}\label{defn1.3} 
For $S = \Cb(x)$ define
\begin{enumerate}
  \item $d: S\rightarrow \Omega^1S$ $: f\mapsto df$ or $f(x)\mapsto f(x)-f(y)$, the noncommutative differential;
  \item $\delta :S\otimes S\rightarrow S\otimes S:$ $f(x,y)\mapsto (x-y)f(x,y)$, the inner derivation on $S\otimes S$ as an $S$-bimodule given by $x\in S$;
  \item $D: \Omega^1 S\to S\otimes S:$ $f dg\mapsto f(x){\frac{g(x)-g(y)}{x-y}}$, dividing by $x-y$; in particular, $D\circ d:s\rightarrow S\otimes S:$ $g(x)\mapsto {\frac{g(x)-g(y)}{x-y}}$ is the divided difference;
  \item $\partial :S\rightarrow S:$ $f(x)\mapsto f'(x)$, the usual derivative.
  \end{enumerate}
\end{defn}
See (\ref{diffvee}) for a significant example.\par
\indent Products of Hankel operators arise from rational differential equations, involving poles. As is common in control theory, we let ${\calS}$ be the unital complex algebra of stable rational functions, namely the space of $f(s)/g(s)$ where $f(s), g(s)\in {\Cb}[s]$ satisfy $\degree f(s)\leq \degree g(s)$, and all the zeroes of $g(s)$ satisfy $\Re s < 0$. 
We allow poles at $\infty$ by adjoining $s$ to form  ${\calS}[s]$. 
Now let $p_1, \dots, p_r \in \Cb$ be distinct points satisfying $\Re p_j > 0$, and for $P= \{ p_1, \dots ,p_r,\infty\}$, introduce the algebra 
  \begin{equation}\label{eq2.21}
  \calS_P= \calS  \left[s, \frac{s+1}{s-p_1}, \dots , \frac{s+1}{s-p_r} \right]
  \end{equation}
by formally adjoining the inverses of $(s-p_j)/(s+1)\in \calS$. The possible poles of elements of $\calS_p$ are in $P$ or the open left half plane.
For even $m=2n$, let $\Omega_\infty,\Omega_1,\dots,\Omega_r \in M_{m\times m}(\calS)$ and define $\Omega(s) \in M_{n\times n}(\calS_p)$ by
  \begin{equation}\label{eq2.22}
    \Omega (s)=\Omega_\infty(s)  s+\sum_{j=1}^r \frac{\Omega_j(s)}{s-p_j}.
  \end{equation}
The residues are $\Omega_\infty (\infty ), \Omega_1(p_1), \dots, \Omega_r(p_r)$. We assume further that $\Omega (s)^\top =\Omega (s)$, where $\Omega^\top$ denotes the transpose. Let
\begin{equation}\label{J}
    J=\begin{bmatrix}
      0   &   -I_n   \\
      I_n &     0 \end{bmatrix}.
\end{equation}
We also let $\langle \cdot, \cdot\rangle :{\mathbb C}^m\times {\mathbb C}^{m}\rightarrow {\mathbb C}$ $\langle (z_j)_{j=1}^m , (w_j)_{j=1}^m\rangle=\sum_{j=1}^mz_jw_j$ be the standard bilinear pairing, and $\Vert \cdot \Vert$ the operator norm on $M_{m\times m}({\Cb})$. 
Given a solution of the differential equation 
\begin{equation}
   J{\frac{d}{dx}}\Psi =\Omega \Psi,
\end{equation} 
for $\Psi \in L^2((0, \infty ); \Cb)$ we can introduce a kernel $k(x,y)=\langle J\Psi (x),\Psi (y)\rangle/ (x-y)$ which operates on $L^2((0, \infty ); \Cb)$. 

\begin{prop}\label{Prop2.8}
Let $\Omega$ be as in (\ref{eq2.22}) and suppose that
$\int_0^\infty t \norm{\Psi(t)}^2 \,dt$ is finite. Then there exist Hankel integral operators
  $\Gamma_{\psi_j},  \Gamma_{\phi_j}
       \in {\calL}^2(L^2((0, \infty ); M_{m\times m}({\Cb})))$
such that
  $K = \sum_{j=1}^N \Gamma_{\psi_j}\Gamma_{\phi_j}$ determines a trace class operator $K$ on $L^2((0, \infty ); {\Cb}^m)$ with kernel
  \begin{equation}\label{eq2.31}
     k(z,w) = \int_0^\infty \sum_{j=1}^N\psi_j(z+u) \phi_j(w+u) \,du
           \qquad (z,w>0).
  \end{equation}
\end{prop}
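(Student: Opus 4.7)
The overall plan is to convert the ODE $J\Psi'=\Omega\Psi$ into a first-order translation identity for the kernel $k(z,w)=\langle J\Psi(z),\Psi(w)\rangle/(z-w)$, and then to integrate along the characteristic flow $u\mapsto (z+u,w+u)$ to recover $k$ as an integral of a finite-rank tensor, which is precisely the kernel of a sum of Hankel products.

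The first step is to verify that $k$ extends continuously across the diagonal: the antisymmetry $J^\top = -J$ forces $\langle J\Psi(x),\Psi(x)\rangle = 0$, so $(z-w)$ divides $\langle J\Psi(z),\Psi(w)\rangle$ smoothly. Differentiating this inner product in $z$ and $w$, substituting $J\Psi'=\Omega\Psi$, and using $\Omega^\top = \Omega$ I would obtain
\[
(\partial_z+\partial_w)\langle J\Psi(z),\Psi(w)\rangle = \bigl\langle \Psi(z),(\Omega(z)-\Omega(w))\Psi(w)\bigr\rangle,
\]
and hence
\[
(\partial_z+\partial_w)k(z,w) = \bigl\langle \Psi(z),(Dd\Omega)(z,w)\Psi(w)\bigr\rangle,
\]
with $Dd$ the divided difference of Definition \ref{defn1.3}(iii).

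Next I would expand $Dd\Omega$ termwise via the representation (\ref{eq2.22}). The simple-pole term $\Omega_j(s)/(s-p_j)$ produces, after absorbing $1/(z-p_j)$ and $1/(w-p_j)$ into the $\Psi$-factors and writing the residue as a finite sum of outer products $\Omega_j(s)=\sum_a u_{ja}(s)\otimes v_{ja}(s)$, a factorisation of the form $\sum_a \tilde\psi_{j,a}(z)\tilde\phi_{j,a}(w)$; the term $\Omega_\infty(s)s$ is handled similarly, its divided difference being a polynomial kernel of finite rank. Collecting everything,
\[
(\partial_z+\partial_w)k(z,w) = -\sum_{j=1}^N \psi_j(z)\phi_j(w)
\]
for finitely many functions $\psi_j,\phi_j$ in the Hankel class. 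Translating then gives $\tfrac{d}{du}k(z+u,w+u)=-\sum_j \psi_j(z+u)\phi_j(w+u)$, and the hypothesis $\int_0^\infty t\snorm{\Psi(t)}^2 dt<\infty$ forces $k(z+u,w+u)\to 0$ as $u\to\infty$ (via the Cauchy--Schwarz bound on $\langle J\Psi(z+u),\Psi(w+u)\rangle$ and dominated convergence). Integrating from $u=0$ to $\infty$ yields $k(z,w)=\int_0^\infty \sum_j \psi_j(z+u)\phi_j(w+u)\,du$, which is the kernel of $\sum_{j=1}^N \Gamma_{\psi_j}\Gamma_{\phi_j}$; the same integrability hypothesis makes each $\Gamma_{\psi_j},\Gamma_{\phi_j}$ Hilbert--Schmidt by the criterion recalled in the Introduction, so $K$ is a finite sum of products of Hilbert--Schmidt operators and therefore trace class.

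The main obstacle I expect is the second step: because the residues $\Omega_j(s)$ are themselves stable rational matrix functions rather than constants, $Dd\Omega$ mixes the poles at $p_j$ with the $(z-w)^{-1}$ denominator, and the resulting terms must be iteratively reduced to tensors whose factors lie in the Hankel class and whose $\Psi$-modifications still satisfy $\int t\snorm{\psi_j(t)}^2 dt<\infty$. The algebra $\calS_P$ of (\ref{eq2.21}) and the differential calculus $d,D$ of Definition \ref{defn1.3} are precisely the tools for this bookkeeping, and I would lean on them to keep the reduction transparent rather than unpacking partial fractions by hand.
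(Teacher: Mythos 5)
Your proposal follows essentially the same route as the paper's proof: differentiate $k$ along the diagonal flow $\partial_z+\partial_w$, use $J\Psi'=\Omega\Psi$ and the symmetry of $\Omega$ to reduce to the divided difference $(\Omega(z)-\Omega(w))/(z-w)$, expand that by partial fractions into a finite-rank tensor $\sum_j\psi_j(z)\phi_j(w)$, integrate back using the decay of $k$ at infinity to obtain (\ref{eq2.31}), and invoke the $\int_0^\infty t\snorm{\phi(t)}^2\,dt<\infty$ criterion to get Hilbert--Schmidt factors and hence a trace-class product. The only cosmetic difference is that you make explicit the vanishing of $\langle J\Psi(x),\Psi(x)\rangle$ on the diagonal, while the paper instead records the explicit antidiagonal-matrix factorisation of the divided differences of $(s-\alpha)^{-j}$; both are the same bookkeeping.
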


\begin{proof} Consider $\partial_\Delta: R\otimes R\rightarrow R\otimes R:$  $\partial_\Delta f(x,y) ={\frac{\partial}{\partial x}}f(x,y)
 + {\frac{\partial}{\partial y}}f(x,y)$, so $\partial_\Delta = \partial\otimes Id+Id\otimes\partial$.
We have
  \begin{align}
    \delta_\Delta K(z,w)
        &= \delta_\Delta D\bigl( \Psi(z)^\top J \Psi(w) - \Psi(w)^\top  J \Psi(z)\bigr)
             \notag \\
        &= \Psi(z)^\top \frac{-\Omega(z)+\Omega(w)}{z-w} \Psi(w)
             - \Psi(w)^\top \frac{\Omega(z)-\Omega(w)}{z-w} \Psi(z),
  \end{align}
which is a finite-rank $M_{m\times m}({\Cb})$-valued kernel. Here
the entries of the difference quotient $(\Omega(z)-\Omega (w))/(z-w)$ may be expressed as a sum of products of rational functions in $z$ and rational functions in $w$, which are bounded for $z,w>0$ by the partial fraction decomposition of $\Omega (s)$ in (\ref{eq2.22}). Note that
$${\frac{(z-\alpha )^{-j}-(w-\alpha )^{-j}}{z-w}}=-\begin{bmatrix} {\frac{1}{z-\alpha }}& \dots &{\frac{1}{(z-\alpha )^j}}\end{bmatrix}\begin{bmatrix} 0&\dots &1\\
\vdots & \ddots& \vdots\\ 1&\dots&0\end{bmatrix}\begin{bmatrix} {\frac{1}{w-\alpha}}\\ \vdots \\ {\frac{1}{(w-\alpha )^j}}\end{bmatrix}$$
in which the matrix $R$ is constant on cross diagonals, and is real symmetric and satisfies $R^2=I_{r\times r}$, with ${\hbox{trace}}\, R=0$ for even $r$ and $1$ for odd $r$, hence $R$ is similar via a real orthogonal matrix to a real diagonal matrix with signature $0$ or $1$, respectively.   
From the right-hand side of (\ref{eq2.31}) we extract coefficients depending on $w$ or $z$ such that
  \begin{align}\label{eq2.32}
    \delta_\Delta K(z,w)
       &= -\sum_{j=1}^N \psi_j(z)\phi_j(w)  \notag \\
       &= \delta_\Delta \int_0^\infty \sum_{j=1}^N \psi_j(z+u) \phi_j(w+u)\, du
  \end{align}
Also, $k(z,w)\rightarrow 0$ as $z\rightarrow\infty$ or $w\rightarrow\infty$. Likewise the right-hand side of (\ref{eq2.29}) converges to $0$ as  $z\rightarrow\infty$ or $w\rightarrow\infty$, so we have the identity (\ref{eq2.31}).
Recall that if $\int_0^\infty t \norm{\phi(t)}^2 \,dt$ is finite then $\Gamma_{\phi}$ is Hilbert--Schmidt. The representation $K = \sum_{j=1}^N \Gamma_{\psi_j}\Gamma_{\phi_j}$ then shows that $K$ is a trace-class operator.
\end{proof}

\begin{thm}\label{thmPDE} 
Let $k$ be a trace-class kernel as in Proposition~\ref{Prop2.8}, and let $k_x(u,v)=k(x+u,x+v)$. 
Then for all $\lambda\in \Cb$, there exists a matrix function $ T(x,y)\in M_{N\times N}(\Cb)$ for $0\leq x\leq y$ that satisfies
   \begin{equation}{\frac{\partial^2 T}{\partial x^2}}(x,y) -{\frac{\partial^2 T}{\partial y^2}}(x,y)=-2\Bigl({\frac{d}{dx}}T(x,x)\Bigr) T(x,y)\qquad (0<x<y)\end{equation}
and
   \begin{equation}{\frac{d}{dx}}\log\det (I+\lambda K_x)=\trace T(x,x)\qquad (x>0).\end{equation}
\end{thm}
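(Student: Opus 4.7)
The plan is to adapt the Its--Izergin--Korepin--Slavnov integrable-kernel method \cite{DIZ} to the present matrix setting. The essential algebraic input is already recorded in the proof of Proposition~\ref{Prop2.8}: the displaced kernel satisfies the identity
\[(\partial_u + \partial_v)\, k(x+u,\, x+v) = -\sum_{j=1}^N \psi_j(x+u)\,\phi_j(x+v),\]
so that although $K_x$ is not itself Hankel, its shift-direction derivative $\partial_x K_x$ is a finite-rank operator built from the same data $\psi_j, \phi_j$ appearing in the decomposition $K = \sum_j \Gamma_{\psi_j}\Gamma_{\phi_j}$.

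First I would introduce the resolvent $\rho_x = (I + \lambda K_x)^{-1}$, which exists away from the discrete spectrum of $-\lambda^{-1}$, with the final formula extended to generic $\lambda \in \Cb$ by analyticity. For each $j = 1, \dots, N$ introduce the resolvent-transformed functions $Q_j(x, t) = (\rho_x \psi_j^{(x)})(t)$ and $P_j(x, t) = (\rho_x^\top \phi_j^{(x)})(t)$, where $\psi_j^{(x)}(t) = \psi_j(x+t)$, and define the $N \times N$ matrix
\[ T_{ij}(x, y) = \lambda \int_0^\infty \langle P_i(x, s),\, \psi_j(y+s)\rangle\, ds, \]
with the inner pairing absorbing the $m \times m$ block structure. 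Jacobi's formula $\frac{d}{dx}\log\det(I + \lambda K_x) = \lambda\,\trace(\rho_x \partial_x K_x)$ combined with the integrable-type identity above then reduces the operator trace to a sum of pairings $\sum_i \langle Q_i(x, \cdot),\, \phi_i^{(x)}\rangle$, which matches $\trace T(x, x)$ after a sign adjustment in the definition.

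For the PDE I would compute $(\partial_x + \partial_y)T$ and $(\partial_x - \partial_y)T$ separately. The resolvent identity $\partial_x \rho_x = -\lambda\, \rho_x\, (\partial_x K_x)\, \rho_x$, combined with the integrable-type identity above, yields transport equations for $Q_j$ and $P_j$ in which the Hankel direction $(\partial_x + \partial_t)$ acts by a finite-rank correction, while the transverse direction merely shifts the argument. Feeding these into the definition of $T$ and integrating by parts then produces a wave equation $(\partial_x^2 - \partial_y^2)\,T(x, y) = U(x)\, T(x, y)$ with a matrix potential $U(x)$ built from the diagonal data; careful bookkeeping should identify $U(x) = -2\frac{d}{dx} T(x, x)$, which yields the claimed identity. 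The main obstacle is to organize the two nested matrix structures simultaneously (the $m \times m$ block structure inside each entry of the $N \times N$ matrix $T$) and to isolate the precise constant $-2$ in the potential; conceptually $T$ plays the role of the kernel of a matrix Gelfand--Levitan--Marchenko transformation operator, for which wave equations of exactly this form are classical in inverse scattering, and the technical labor consists in transcribing that classical picture into the present linear-systems framework while tracking all constants.
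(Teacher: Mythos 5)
Your route is genuinely different from the paper's. You work with the resolvent $\rho_x=(I+\lambda K_x)^{-1}$ in the Its--Izergin--Korepin--Slavnov style, exploiting the fact that $\partial_x K_x$ is finite rank (which is indeed the content of (\ref{eq2.32})), and your computation of $\trace T(x,x)=\pm\lambda\,\trace(\rho_x\partial_xK_x)=\mp\frac{d}{dx}\log\det(I+\lambda K_x)$ via Jacobi's formula is correct and arguably cleaner than the paper's. The paper instead factorizes $K=\Gamma_{\Phi_1}\Gamma_{\Phi_2}$ into matrix Hankel operators, invokes the Megretskii--Peller--Treil inverse spectral theorem \cite{MPT} to realise each factor by a linear system, assembles the block system $(-\hat A,\hat B,\hat C)$, and writes down the \emph{explicit} formula $\hat T(x,y)=-\hat Ce^{-x\hat A}(I+\hat R_x)^{-1}e^{-y\hat A}\hat B$, which solves the Gelfand--Levitan equation (\ref{GL}); the trace identity then drops out of the Lyapunov equation (\ref{Lyap}). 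What the paper's detour through linear systems buys is precisely that explicit formula: both $\partial_x$ and $\partial_y$ act on $\hat T$ through $A$, $\hat R_x$ and the Lyapunov relation, so $(\partial_x^2-\partial_y^2)\hat T$ can be computed in closed form and the constant $-2$ in the potential identified mechanically.

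That last point is where your proposal has a real gap rather than a stylistic difference. Your matrix $T$ is defined only implicitly through the resolvent-transformed functions $P_i$, $Q_j$, and the entire wave equation $(\partial_x^2-\partial_y^2)T=-2\bigl(\frac{d}{dx}T(x,x)\bigr)T$ is deferred to ``careful bookkeeping'' and an appeal to the classical Gelfand--Levitan--Marchenko picture. But that classical picture requires knowing that your $T(x,\cdot)$ satisfies a Gelfand--Levitan equation of the form $\hat\Phi(x+y)+T(x,y)+\int_x^\infty T(x,z)\hat\Phi(z+y)\,dz=0$ with respect to a \emph{single} matrix Hankel symbol $\hat\Phi$ depending on $x+y$ only; the translation-invariant structure of $\hat\Phi$ is exactly what makes $\partial_x-\partial_y$ annihilate the inhomogeneous term and produce the wave operator. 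Establishing that your resolvent-defined $T$ coincides with such a GLM kernel is not bookkeeping: it is equivalent to repackaging the sum $\sum_j\Gamma_{\psi_j}\Gamma_{\phi_j}$ as a single block Hankel product, which is the step the paper performs via the $2\times2$ block system and the determinant identity $\det(I+\lambda\Gamma_{\Phi_1}\Gamma_{\Phi_2})=\det\begin{bmatrix}I&\lambda\Gamma_{\Phi_1}\\-\Gamma_{\Phi_2}&I\end{bmatrix}$. To complete your argument you would either need to prove the GL equation for your $T$ directly, or carry out the $(\partial_x\pm\partial_y)$ transport computations for $P_i,Q_j$ in full and verify that the potential that emerges is $-2\frac{d}{dx}T(x,x)$ with no extra finite-rank remainder; neither is done in the proposal.
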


\begin{proof} Using Proposition~\ref{Prop2.8}, write $K$ in the form $\sum_{j=1}^N \Gamma_{\psi_j}\Gamma_{\phi_j}$ with $\psi_j, \phi_j \in L^2((0, \infty ); \Rb)$, or in matrix form
\begin{equation}\begin{bmatrix} K&0\\ 0&0\end{bmatrix}=\begin{bmatrix} \Gamma_{\psi_1}&\dots &\Gamma_{\psi_N}\\ 0&\dots &0\end{bmatrix}\begin{bmatrix} \Gamma_{\phi_1}&0\\ \vdots &\vdots \\\Gamma_{\phi_N}&0\end{bmatrix}.\end{equation}  
 Consider $N\times N$ matrix functions
  \[ \Phi_1=
      \begin{bmatrix} \psi_1&\dots &\psi_N\\ {}&0&{}\end{bmatrix},
      \Phi_2=
      \begin{bmatrix}\phi_1&{}\\ \vdots &0 \\ \phi_N&{} \end{bmatrix}.
     \]
First we factorize $K$ as a product of matrix Hankel operators.
By Proposition \ref{Prop2.8} $K$ is a complex-linear combination of Hankel products $\Gamma_\psi\Gamma_\phi$, where $\psi, \phi\in L^2((0, \infty ); \Rb)$. 
Given measurable $\phi :(0, \infty )\rightarrow \Rb$ such that $\int_0^\infty t\vert \phi (t)\vert^2dt$ is finite, the Hankel operator with scattering function $\phi$ is self-adjoint and Hilbert--Schmidt, hence can be realised by a linear system $(-A,B,C)$ in continuous time with input and output space $\Cb$; this follows from \cite{MPT}.  
We can apply this to the real and imaginary parts of the entries of $\Phi_1$ and $\Phi_2$, and introduce for $j=1,2$ the matrices
  \[ (-A_j,B_j, C_j)
    = \Biggl( \begin{bmatrix} 
               -A_{j1} &\dots  &0\\
               \vdots  &\ddots &\vdots \\ 
                  0    &\dots  &-A_{jN}
             \end{bmatrix} , 
             \begin{bmatrix} 
              B_{j1}  & \dots & 0\\
              \vdots  &\ddots &\vdots \\
                 0    &\dots  &B_{jN}
             \end{bmatrix}, 
             \begin{bmatrix}
              C_{j1}  &\dots  & C_{jN}\\
              \vdots  &\dots  &\vdots \\
                 0    &\dots  & 0
              \end{bmatrix}\Biggr).  \] 
We can suppose that there are linear systems $(-A_1, B_1,C_1)$ and $(-A_2, B_2,C_2)$ with state space $H$ and input and output spaces $\Cb^N$ such that $\Phi_1(t)=C_1e^{-tA_1}B_1$ and $\Phi_2(t)=C_2e^{-tA_2}B_2$. Then 
   \begin{equation}\det (I+\lambda K)
     =\det (I+\lambda \Gamma_{\Phi_1}\Gamma_{\Phi_2})
     =\det \begin{bmatrix}I&\lambda\Gamma_{\Phi_1}\\
                       -\Gamma_{\Phi_2}&I\end{bmatrix}.
                       \end{equation}
Let $\bigl(-{\hat A}, {\hat B}, {\hat C}\bigr)$ be the linear system
   \[ \bigl(-{\hat A}, {\hat B}, {\hat C}\bigr)
      =\Biggl(\begin{bmatrix}-A_1&0\\ 0&-A_2\end{bmatrix},
       \begin{bmatrix}0&B_1\\ B_2&0\end{bmatrix},
       \begin{bmatrix}\lambda C_1&0\\ 0&-C_2\end{bmatrix}\Biggr)
       \]
which has scattering function
    \begin{equation}
    \hat \Phi (t)={\hat C}e^{-t{\hat A}}{\hat B}
    = \begin{bmatrix}0&\lambda C_1e^{-tA_1}B_1\\                
                    -C_2e^{-tA_2}B_2&0\end{bmatrix}
    =\begin{bmatrix} 0&\lambda \Phi_1(t)\\
                 -\Phi_2 (t)&0\end{bmatrix}.
    \end{equation}
The operator function
   \begin{equation}
   {\hat R}_x
   =\int_x^\infty e^{-t{\hat A}}{\hat B}{\hat C} e^{-t{\hat A}}\,dt
   = \int_x^\infty 
      \begin{bmatrix} 0&-e^{-tA_1}B_1C_2e^{-tA_2} \\
             \lambda e^{-tA_2}B_2C_1e^{-tA_1}&0\end{bmatrix}\, dt
    \end{equation}
satisfies the Lyapunov equation $\frac{d{\hat R}_x}{dx} = -{\hat R}_x {\hat A} - {\hat A}{\hat R}_x$ and we clearly have
   \begin{equation}\label{Lyap}
      -{\frac{d{\hat R}_x}{dx}}
      = e^{-x{\hat A}}{\hat B}{\hat C}e^{-x{\hat A}}
          \qquad (x>0).
    \end{equation}
Its determinant satisfies
   \begin{equation}
   \det (I+R_0)
      =\det (I+\Gamma_{\hat\Phi})
      =\det (I+\lambda K).
   \end{equation}
   
Let 
   \[ {\hat T}(x,y)
    =-{\hat C} e^{-x{\hat A}}(I+{\hat R}_x)^{-1} e^{-y{\hat A}}{\hat B} \]

To find this determinant, we consider the Gelfand--Levitan equation
   \begin{equation}\label{GL}
   {\hat \Phi} (x+y) + {\hat T}(x,y) 
      + \int_x^\infty {\hat T}(x,z) {\hat \Phi} (z+y) \, dz
       =0    \qquad (0<x<y)
    \end{equation}
which has solution
  \[ {\hat T}(x,y)
    =-{\hat C} e^{-x{\hat A}}(I+{\hat R}_x)^{-1} e^{-y{\hat A}}{\hat B} \] 
as one verifies by substituting the given formula into the equation. Also, the Lyapunov equation gives
   \begin{align*} 
   \trace\bigl( {\hat T}(x,x)\bigr)
      & = \trace \Bigl( (I+{\hat R}_x)^{-1}
          {\frac{d{\hat R}_x}{dx}}\Bigr)\\
      &= {\frac{d}{dx}} \log\det (I+{\hat R}_x).
    \end{align*}
Also, $\hat T$ is a solution of
  \[ {\frac{\partial^2 {\hat T}}{\partial x^2}}(x,y)
      -{\frac{\partial^2 {\hat T}}{\partial y^2}}(x,y)
      = -2\Bigl({\frac{d}{dx}}{\hat T}(x,x)\Bigr)
           {\hat T}(x,y)    \qquad (0<x<y). \]
\end{proof}
Theorem \ref{thmPDE} applies in particular to the sinh-Gordon equation, as we discuss in section \ref{S:Sinh-Gordon}.

\begin{ex} For the weight $x^\alpha e^{-x}$ on $(0, \infty )$, Laguerre's polynomials
$$L^{(\alpha )}_n(x)=x^{-\alpha} e^{x}{\frac{d^n}{dx^n}} \bigl( x^{n+\alpha }e^{-x}\bigr)\qquad (\alpha \geq 0, n=0,1, 2, \dots )$$
are orthononal. Let $u_\alpha (x)=e^{-x/2} x^{(\alpha +1)/2}L_n^{(\alpha )}(x)$
 and $w_\alpha (x)=u_\alpha '(x);$ then these give a solution of Laguerre's differential equation
$$\begin{bmatrix} 0&-1\\ 1&0\end{bmatrix} {\frac{d}{dx}}\begin{bmatrix} u_\alpha \\ w_\alpha \end{bmatrix} =\begin{bmatrix} {\frac{2n+\alpha+1}{2x}}+{\frac{1-\alpha^2}{4x^2}}-{\frac{1}{4}}&0\\ 0&1\end{bmatrix}\begin{bmatrix} u_\alpha \\ w_\alpha \end{bmatrix}, $$
so one can apply Proposition \ref{Prop2.8}. In particular, for $\alpha=1$, we obtain a Hankel product 
$${\frac{u_1(x)u_1'(y)-u_1'(x)u_1(y)}{x-y}}=\int_0^\infty {\frac{u_1(x+t)u_1(y+t)}{(x+t)(y+t)}}dt.$$
The Laguerre system is associated with orthogonal polynomials for the weight $x^\alpha e^{-x}$ on $(0, \infty )$, which is the limit of the semi-classical weights $x^\alpha (1-x/n)^n$ from \cite{Ma}. In section \ref{S:Hankeldeterminants}, we consider the singular weight $w_{\alpha ,s}(s)=x^\alpha e^{-x-s/x}$, which has $\log w_{\alpha ,s}(x)$ not integrable over $(0, 1)$, hence lies  beyond the scope of Szeg\"o's theory on asymptotic formulas for orthogonal polynomials.\par
\end{ex}

%
%

\section{A linear system for Darboux addition}\label{S:Darboux}

The pair of linear systems $(-A, B, C)$ giving $I+R_x$ and $(-A,B,-C)$ giving $I-R_x$ are related by a Darboux transformation, as discussed in Theorem 3.4 of \cite{BN}. As there, we introduce the matricial system
  \begin{equation} \bigl(-{\hat A}, {\hat B}, {\hat C}\bigr)
    =\Biggl(\begin{bmatrix}-A&0\\ 0&-A\end{bmatrix},
       \begin{bmatrix}0&B\\ B&0\end{bmatrix},
       \begin{bmatrix}C&0\\ 0&C\end{bmatrix}\Biggr) \end{equation}
which has scattering function
  \[ {\hat \Phi} (t)
   = {\hat C} e^{-t{\hat A}}{\hat B}
   = \begin{bmatrix} 0&Ce^{-tA}\\ Ce^{-tA}B&0\end{bmatrix}
   =\begin{bmatrix} 0&\phi (t)\\ \phi (t)&0\end{bmatrix}.
   \]
Let
  \begin{align}{\hat R}_x
    &=\int_x^\infty e^{-t{\hat A}}{\hat B}{\hat C} e^{-t{\hat A}} \,dt\nonumber\\
    &= \int_x^\infty \begin{bmatrix} 0&e^{-tA}BCe^{-tA} \\            
                        e^{-tA}BCe^{-tA}&0\end{bmatrix} \, dt= \begin{bmatrix} 0&R_x\\ R_x&0\end{bmatrix}.
   \end{align}
   so that when $I\pm R_x$ are invertible, we have
\[\hat F_x=(I+\hat R_x)^{-1} =\begin{bmatrix} (I-R_x^2)^{-1}& -R(I-R_x^2)^{-1}\\ -R(I-R_x^2)^{-1}& (I-R_x^2)^{-1}\end{bmatrix}.\]

We can then define
   \begin{align}\label{behatted}
     {\hat T}(x,y)
    &=-{\hat C} e^{-t{\hat A}}(I+{\hat R}_x)^{-1} e^{-y{\hat A}}{\hat B} 
    = \begin{bmatrix}W(x,y)&V(x,y)\\ V(x,y)& W(x,y)\end{bmatrix}
   \end{align}
where 
   \begin{align}
    W(x,y) & = Ce^{-xA}(I-R^2_x)^{-1}R_xe^{-yA}B \\
    V(x,y) & = -Ce^{-xA}(I-R_x^2)^{-1}e^{-yA}B.
   \end{align}
   
\begin{prop} 
\begin{enumerate}
\item[(i)] The functions $V(x,x)$ and 
  \[ S(x)=\log\det (I+R_x)-\log\det (I-R_x) \]
satisfy
  \begin{equation}
    {\frac{d}{dx}} S(x)=2V(x,x).
  \end{equation}
\item[(ii)] The Fredholm determinant satisfies
  \begin{equation} 
    \det (I-\Gamma_{\phi_x}^2)
      = \exp \Bigl( -4\int_x^\infty (t-x)V(t,t)^2\, dt\Bigr),
    \end{equation}
\item[(iii)] and with $q(x)=-4\lfloor A\rfloor$, 
the function
  \begin{equation} 
    U(x) = \exp \Bigl( 2\int_x^\infty V(t,t) \, dt\Bigr)
      =\exp (-S(x))
  \end{equation} 
satisfies
  \begin{equation} 
     {\frac{d^2U}{dx^2}} = q(x) U(x).
  \end{equation}
\end{enumerate}
\end{prop}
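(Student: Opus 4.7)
All three statements follow from direct calculation once the right auxiliaries are in hand. Write $\Psi_x=e^{-xA}B$, $\bar\Psi_x=Ce^{-xA}$, $P_x=(I+R_x)^{-1}$, $Q_x=(I-R_x)^{-1}$, so that $F_x=(I-R_x^2)^{-1}=\tfrac12(P_x+Q_x)$ and $G_x=R_xF_x=\tfrac12(Q_x-P_x)$; then $V(x,x)=-\bar\Psi_x F_x\Psi_x$ and $W(x,x)=\bar\Psi_x G_x\Psi_x$. The two inputs I use repeatedly are the Lyapunov equation $dR_x/dx=-\Psi_x\bar\Psi_x$, equivalently $AR_x+R_xA=\Psi_x\bar\Psi_x$, and Jacobi's identity $\frac{d}{dx}\log\det(I+K_x)=\trace((I+K_x)^{-1}K_x')$.

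\textbf{Parts (i) and (ii).} For (i), apply Jacobi to $\log\det(I\pm R_x)$, use $R_x'=-\Psi_x\bar\Psi_x$ together with cyclicity of the trace, then subtract to get $S'(x)=-\bar\Psi_x(P_x+Q_x)\Psi_x=-2\bar\Psi_x F_x\Psi_x=2V(x,x)$. For (ii), add instead of subtract to obtain $\frac{d}{dx}\log\det(I-R_x^2)=\bar\Psi_x(Q_x-P_x)\Psi_x=2W(x,x)$; the block-form hatted system identifies $\det(I-R_x^2)$ with $\det(I-\Gamma_{\phi_x}^2)$. Since $W(x,x)\to 0$ as $x\to\infty$, the stated formula reduces to proving $\frac{d}{dx}W(x,x)=-2V(x,x)^2$ and integrating twice (a factor $(t-x)$ appears via the standard swap $\int_x^\infty\!\!\int_t^\infty\to\int_x^\infty(t-x)\,dt$). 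A direct computation of $G_x'$ from $P_x'=AP_x+P_xA-2P_xAP_x$ and $Q_x'=AQ_x+Q_xA-2Q_xAQ_x$ (which themselves come from Lyapunov) yields $G_x'=AG_x+G_xA-2(F_xAG_x+G_xAF_x)$, so
\begin{equation*}
\frac{d}{dx}W(x,x)=\bar\Psi_x(-AG_x-G_xA+G_x')\Psi_x=-2\bar\Psi_x(F_xAG_x+G_xAF_x)\Psi_x.
\end{equation*}
Sandwiching the Lyapunov identity $AR_x+R_xA=\Psi_x\bar\Psi_x$ by $F_x$ on both sides gives $F_xAG_x+G_xAF_x=F_x\Psi_x\bar\Psi_x F_x$, and pairing with $\bar\Psi_x$ and $\Psi_x$ produces $(\bar\Psi_x F_x\Psi_x)^2=V(x,x)^2$, which closes the argument.

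\textbf{Part (iii).} From $U=e^{-S}$, $U''=[(S')^2-S'']U=[4V(x,x)^2-2V'(x,x)]U$, so the claim reduces to the Riccati-type identity
\begin{equation*}
V'(x,x)-2V(x,x)^2=2\lfloor A\rfloor.
\end{equation*}
Differentiating $V(x,x)=-\bar\Psi_xF_x\Psi_x$ and using $F_x'=AF_x+F_xA-2F_xAF_x-2G_xAG_x$ (derived as in part (ii)) gives $V'(x,x)=2\bar\Psi_x(F_xAF_x+G_xAG_x)\Psi_x$; expansion in $P_x,Q_x$ collapses this to $\bar\Psi_x(P_xAP_x+Q_xAQ_x)\Psi_x$. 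On the other hand, the elementary identity $Q_xAQ_x-P_xAP_x=2(F_xAG_x+G_xAF_x)$ (immediate from $P_x=F_x-G_x$, $Q_x=F_x+G_x$) combined with the sandwich identity of part (ii) yields $\bar\Psi_x(Q_xAQ_x-P_xAP_x)\Psi_x=2V(x,x)^2$. Subtracting leaves $V'(x,x)-2V(x,x)^2=2\bar\Psi_x P_xAP_x\Psi_x=2\lfloor A\rfloor$ by the bracket formula~(\ref{bracket}), and hence $U''=-4\lfloor A\rfloor U=q(x)U$.

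\textbf{Main obstacle.} The only non-mechanical algebraic step — reused in both (ii) and (iii) — is recognizing that the Lyapunov equation, when conjugated by $F_x$, produces the rank-one relation $F_xAG_x+G_xAF_x=F_x\Psi_x\bar\Psi_x F_x$ whose $\bar\Psi_x/\Psi_x$-sandwich is exactly $V(x,x)^2$. Once this observation is in place, the Schr\"odinger equation for $U=\tau^-/\tau^+$ becomes an algebraic identity rather than a genuine differential input, and the three statements line up on a common computational backbone.
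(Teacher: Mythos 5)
Your proposal is correct and follows essentially the same route as the paper: part (i) is the identical Jacobi--Lyapunov trace computation, part (ii) rests on the same identities $\frac{d}{dx}\log\det(I-R_x^2)=2W(x,x)$ and $\frac{d}{dx}W(x,x)=-2V(x,x)^2$ followed by double integration, and part (iii) is the same Riccati reduction via $V'(x,x)=2\lfloor A\rfloor+2V(x,x)^2$. The only difference is presentational: you carry out the derivative computations directly with the resolvents $P_x,Q_x,F_x,G_x$ and the Lyapunov sandwich $F_xAG_x+G_xAF_x=F_x\Psi_x\bar\Psi_xF_x$, whereas the paper routes the same algebra through the bracket $\lfloor\cdot\rfloor$ and the differential-ring structure of Section~\ref{S:Diff-rings}.
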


\begin{proof} (i) From Lyapunov's equation, we have
\begin{align}\label{S'} {\frac{d}{dx}}S(x)&={\frac{d}{dx}}\Bigl( \log\det (I+R_x)-\log\det (I-R_x)\Bigr)\nonumber\\
&= {\hbox{trace}}\Bigl( (I+R_x)^{-1}R'_x+(I-R_x)^{-1}R_x'\Bigr)\nonumber\\
&=-2{\hbox{trace}}\Bigl((I-R_x^2)^{-1}e^{-xA}BCe^{-xA}\Bigr)\nonumber\\
&=-2Ce^{-xA}(I-R_x^2)^{-1}e^{-xA}B\nonumber\\
&=2V(x,x).\end{align}

(ii) Then
\begin{equation}-V(x,x)=Ce^{-xA}(I-R_x^2)^{-1}e^{-xA}B=\bigl\lfloor (I-R_x)^{-1}(I+R_x)\bigr\rfloor\end{equation}
so by Lemma \ref{diffring}
\begin{align}\label{V'}-{\frac{d}{dx}}V(x,x)&={\frac{\partial}{\partial x}}\bigl\lfloor (I-R_x)^{-1}(I+R_x)\bigr\rfloor\nonumber\\
&= \bigl\lfloor -2A-2(I-R_x)^{-1} (AR_x+R_xA)(I-R_x)^{-1}\bigr\rfloor\nonumber\\
&=-2\bigl\lfloor A\bigr\rfloor -2Ce^{-xA}(I-R_x^2)^{-1}e^{-xA}BCe^{-xA}e^{-xA}(I-R_x^2)^{-1}e^{-xA}B\nonumber\\
&=-2\bigl\lfloor A\bigr\rfloor-2V(x,x)^2.\end{align}
and likewise
\begin{equation}2W(x,x)=Ce^{-xA}2R_x(I-R_x^2)^{-1}e^{-xA}B=\bigl\lfloor 2R_x(I-R_x)^{-1}(I+R_x)\bigr\rfloor\end{equation}
so 
\begin{align}2{\frac{d}{dx}}W(x,x)&={\frac{\partial}{\partial x}}\bigl\lfloor 2R_x(I-R_x)^{-1}(I+R_x)\bigr\rfloor\nonumber\\
&= \bigl\lfloor -4(I-R_x)^{-1}(AR_x+R_xA)(I-R_x)^{-1}\bigr\rfloor\nonumber\\
&= -4 Ce^{-xA}(I-R_x^2)^{-1}e^{-xA}BCe^{-xA}(I-R_x^2)^{-1}e^{-xA}B\nonumber\\
&=-4V(x,x)^2\end{align}

We have
\begin{align}-4V(x,x)^2&= 2{\frac{d}{dx}}W(x,x)\nonumber\\
&={\hbox{trace}}\, \hat T(x,z)\nonumber\\
&=-{\hbox{trace}}\,\hat Ce^{-x\hat A}(I+\hat R_x)^{-1}e^{-x\hat A}\hat B\nonumber\\
&=-{\frac{d}{dx}} {\hbox{trace}} (I+\hat R_x)^{-1} e^{-x\hat A} \hat B\hat Ce^{-x\hat A}\nonumber\\
&={\frac{d^2}{dx^2}} {\hbox{trace}} \log (I+\hat R_x)\nonumber\\
&={\frac{d^2}{dx^2}} \log\det (I+\hat R_x)\nonumber\\
&={\frac{d^2}{dx^2}}\log\det (I-R_x^2).\end{align}

(iii) Also 
  \[ U(x)=\exp 2\int_x^\infty V(t,t)dt \]
satisfies $U'(x)/U(x)=-2V(x,x)$, so as in Riccati's equation
\begin{align}
   {\frac{U''(x)}{U(x)}} -\Bigl( {\frac{U'(x)}{U(x)}}\Bigr)^2
   &= -2{\frac{d}{dx}}V(x,x)\nonumber\\
   &= -4\lfloor A\rfloor -4V(x,x)^2\nonumber\\
   &= -4\lfloor A\rfloor - \Bigl( {\frac{U'(x)}{U(x)}}\Bigr)^2
\end{align}
so $U''(x)=-4\lfloor A\rfloor U$.
\end{proof}

\begin{ex} With $\phi (x)={\hbox{Ai}}(x/2)$, we have
  \[ {\frac{d}{dx}}{\trace}\Gamma_{\phi_x}
  = {\frac{d}{dx}}\int_0^\infty {\hbox{Ai}}(x+t)dt=-{\hbox{Ai}}(x), \]
so as $x\rightarrow\infty$ we have 
\begin{align}
   2V(x,x)
   &= {\frac{d}{dx}}{\trace}\Bigl( \log (I+\Gamma_{\phi_x})-\log (I-\Gamma_{\phi_x})\Bigr)\nonumber\\
   &\approx {\frac{d}{dx}}2 {\trace} \Gamma_{\phi_x}\nonumber\\
   &= -2{\hbox{Ai}}(x).
\end{align}
This example is considered in detail by Hastings and McLeod \cite{HM}. 
\end{ex}

%
%

\section{Solutions of the sinh-Gordon equation}\label{S:Sinh-Gordon}

Howland \cite{Ho} observed that Hankel matrices are analogous to one-dimensional Schr\"od\-inger operators with the role of the Laplacian played by Carleman's operator 
 \begin{equation}\label{carleman}
   \Gamma f(x) = \int_0^\infty {\frac{f(y)dy}{x+y}}
         \qquad (f\in L^2(0, \infty )).
 \end{equation}
Power \cite{Power} gives several unitarily equivalent forms of this operator. To obtain such operators in terms of linear systems, we recall Proposition 2.1 from \cite{B2}. 
Let $H=L^2((0, \infty ); \Cb)$ which has dense linear subspace 
${\calD}(A) = \{ f\in L^2((0, \infty ); {\Cb}): yf(y)\in L^2 ((0, \infty ); \Cb)$. Suppose that $h\in L^\infty ((0, \infty ); {\mathbb R})$ and $h(y)/\sqrt{y}\in L^2(0, \infty )$. Then we let
   \begin{align}\label{SG-linear-sys}
  & A:{\mathcal D}(A)\rightarrow H:\quad  f(y)\mapsto yf(y);\nonumber\\
  & B_t: \Cb\rightarrow H:\quad  \beta\mapsto h(y)
      \exp \Bigl(-{\frac{t}{y}}\Bigr)\beta\nonumber\\
  & C_t : H\rightarrow \Cb:\quad  f\mapsto \int_0^\infty 
    \exp \Bigl(-{\frac{t}{y}}\Bigr) h(y)f(y) \, dy
  \end{align}
where the input and output operators depend upon the real parameter $t>0$. Then we introduce the scattering function
   \begin{equation}\label{phidef}
   \phi (x; t)
   =C_te^{-xA}B_t
   =\int_0^\infty \exp\Bigl( -xy-{\frac{2t}{y}}\Bigr) h(y)^2 \,dy
   \qquad (t>0)
   \end{equation}
which satisfies the linear PDE
\begin{equation}
  {\frac{\partial^2 \phi}{\partial x\partial t}}
    = 2\phi, 
\end{equation}
which may be interpreted as a linear counterpart of the sinh-Gordon equation. We also introduce
  \[ R_{(x;t)}=\int_x^\infty e^{-uA}B_tC_te^{-uA} \,du. \]
In the following result, the pair $(x,t)$ may be regarded as light-cone co-ordinates, rather than space and time. 
\begin{thm}\label{sinhGordontheorem} For the linear system (\ref{SG-linear-sys}), let
  \begin{equation}\label{Sphase} 
      S (x;t) =\log \det (I+R_{(x;t)}) -\log\det (I-R_{(x;t)}).
  \end{equation}
Then $S$ gives a solution of the sinh-Gordon equation
  \begin{equation}\label{sinhGordon} 
    {\frac{\partial^2S}{\partial x \, \partial t}}
     =2\sinh 2S . 
  \end{equation}
\end{thm}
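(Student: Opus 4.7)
The Howland structure gives $B_t = e^{-tA^{-1}}B_0$ and $C_t = C_0 e^{-tA^{-1}}$, whence $\partial_t B_t = -A^{-1}B_t$ and $\partial_t C_t = -C_t A^{-1}$. Combined with $\partial_x R_{(x;t)} = -e^{-xA}B_t C_t e^{-xA}$ these imply the dual Lyapunov equation
\[
  \partial_t R_{(x;t)} = -A^{-1}R_{(x;t)} - R_{(x;t)}A^{-1},
\]
obtained by differentiating $B_tC_t$ under the integral and commuting $A^{-1}$ past $e^{-uA}$. Setting $F_\pm = (I\pm R_{(x;t)})^{-1}$, the chain rule (together with $R F_\pm = F_\pm R = \pm(I-F_\pm)$) yields
\begin{align*}
  \partial_x F_\pm &= F_\pm A + AF_\pm - 2F_\pm AF_\pm,\\
  \partial_t F_\pm &= F_\pm A^{-1} + A^{-1}F_\pm - 2F_\pm A^{-1}F_\pm.
\end{align*}
Applying the preceding Proposition to the linear system $(-A,B_t,C_t)$ at each fixed $t$ gives $\partial_x\log\tau_\pm(x;t) = \mp W_\pm(x;t)$, where $W_\pm := C_t e^{-xA}F_\pm e^{-xA}B_t$, so $\partial_x S = -W_+ - W_-$.

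\textbf{The mixed derivative via a cancellation.} Now differentiate $W_\pm$ in $t$. The product rule produces three groups of terms. Because $A^{-1}$ commutes with $e^{-xA}$, the contribution of $\partial_t C_t = -C_tA^{-1}$ exactly cancels that of the $A^{-1}F_\pm$ piece of $\partial_t F_\pm$; likewise the contribution of $\partial_t B_t = -A^{-1}B_t$ cancels that of the $F_\pm A^{-1}$ piece. Only the sandwiched term survives:
\[
  \partial_t W_\pm = -2\, C_t e^{-xA}F_\pm A^{-1} F_\pm e^{-xA}B_t =: -2 P_\pm(x;t).
\]
Consequently $(\log\tau_\pm)_{xt} = \pm 2 P_\pm$ and
\[
  \partial_x\partial_t S(x;t) = 2P_+(x;t) + 2P_-(x;t).
\]

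\textbf{Identification with $2\sinh 2S$ and main obstacle.} The proof is reduced to the Hirota bilinear identities
\[
  2P_\pm(x;t) = \pm\bigl(1 - (\tau_\mp(x;t)/\tau_\pm(x;t))^2\bigr),
  \qquad\text{i.e.}\qquad(\log\tau_\pm)_{xt} = 1 - e^{\mp 2S},
\]
for summing them gives $\partial_x\partial_t S = e^{2S} - e^{-2S} = 2\sinh 2S$. To establish them I would exploit the linear sinh-Gordon precursor $\partial_x\partial_t\phi = 2\phi$ for the scattering function: it implies the primitive identity $\phi_t(s;t) = -2\int_s^\infty\phi(s';t)\,ds'$, and substituting this into the $t$-derivative of the Gelfand--Levitan equation for $T_\pm(x,y;t) = \mp C_te^{-xA}F_\pm e^{-yA}B_t$, followed by integration by parts in $z$, collapses the remaining integrals onto products recognisable via the Fredholm identity $\det(I\mp\Gamma_{\phi_{(x;t)}}) = \tau_\mp(x;t)$. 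This last step is the crux: the cancellations in Paragraphs one and two are mechanical operator algebra valid for \emph{any} pair of Lyapunov-type $x,t$-evolutions, but converting the purely operator-theoretic $C_t e^{-xA}F_\pm A^{-1}F_\pm e^{-xA}B_t$ into a Fredholm determinant ratio uses the Howland-specific PDE $\phi_{xt}=2\phi$ in an essential way. The scalar reduction $H=\Cb$, $R=\alpha$, in which both sides collapse to $4\alpha/(1\pm\alpha)^2$, is a useful consistency check, but the operator-level identity is delicate precisely because $A^{-1}$ does not commute with $R$.
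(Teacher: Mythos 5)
Your first two paragraphs are correct and reproduce, in a cleaner operator form, the preliminary computations of the paper: the dual Lyapunov equation $\partial_t R_{(x;t)}=-A^{-1}R_{(x;t)}-R_{(x;t)}A^{-1}$ is exactly the paper's identity (\ref{Rt}), and your cancellation argument giving $\partial_t W_\pm = -2\,C_te^{-xA}F_\pm A^{-1}F_\pm e^{-xA}B_t$ is a valid and rather tidy way of organising what the paper does with the bracket calculus. Your formula $\partial_x\partial_t S = 2P_+ + 2P_-$ is right.

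The difficulty is that at this point you have only reduced the theorem to the bilinear identities $(\log\tau_\pm)_{xt}=1-e^{\mp 2S}$, and you do not prove them; you offer a plan (differentiate the Gelfand--Levitan equation in $t$, use $\phi_{xt}=2\phi$, integrate by parts, ``collapse'' onto determinant ratios) whose execution is precisely the hard content of the theorem. As you yourself note, everything up to that point is mechanical operator algebra valid for any pair of commuting Lyapunov flows, and the scalar check $4\alpha/(1\pm\alpha)^2$ only works because in one dimension $A^{-1}e^{-xA}BCe^{-xA}A^{-1}$ is a scalar multiple of $R$ itself; in the operator setting $P_\pm = \lfloor\text{(something involving }A^{-1}\text{)}\rfloor$ is not visibly a function of the Fredholm determinants. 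Moreover your target identities are individually \emph{stronger} than the theorem (which only needs their difference), so you have arguably reduced the problem to something at least as hard. The paper closes this gap by an entirely different device: it writes $2S(x;t)=\int_x^\infty\psi(s;t)\,ds$ with $\psi=-4V(x,x)$, derives from the Riccati-type relation $2\psi_x=-16\lfloor A\rfloor-\psi^2$ and the $t$-flow an integro-differential equation
\[
 -\psi_t+\int_x^\infty\psi(y;t)\int_y^\infty\psi(v;t)\,\psi_t(v;t)\,dv\,dy=4\int_x^\infty\psi(y;t)\,dy,
\]
and then resums the nested integrals using $F_\psi^{\,n}\circ 1=\tfrac{1}{n!}(F_\psi\circ 1)^n$ to recognise $(I-F_\psi^2)^{-1}F_\psi\circ 1=\sinh 2S$. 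That resummation of the odd exponential series is the step your sketch is missing, and without it (or a genuine proof of the Hirota identities) the argument is incomplete.
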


\begin{proof} As an integral operator on $L^2(0, \infty )$, $R_{(x;t)}$ has a kernel 
  \begin{equation}\label{Rhowland}
    {\frac{h(y)h(z)}{y+z}}
      \exp \Bigl( -x(y+z)-t\Bigl( {\frac{1}{y}}+{\frac{1}{z}}\Bigr)\Bigr) \qquad (y,z>0)
    \end{equation}
which has the form of a Howland operator. This $R_{(x;t)}$ evidently defines a Hilbert--Schmidt linear operator on $L^2(0, \infty )$.
Suppose for simplicity that $h$ is real-valued. Then $R_{(x;t)}$ is the Schur product of the Carleman operator $\Gamma$ from (\ref{carleman}) with kernel $1/(y+z)$ and the tensor product of $h(y)e^{-xy-t/y}$ with itself. Power \cite{Power} showed that $\Gamma$ has spectrum $[0, \pi ]$, hence $R_{(x;t)}$ is self-adjoint and positive with trace
  \[ \trace\, R_{(x;t)}
  = \int_0^\infty {\frac{h(y)^2}{2y}} e^{_-2xy-2t/y} \,dy
   \leq {\frac{1}{2}} \int_0^\infty \frac{h(y)^2}{y}\, dy.\]
So $R_{(x;t)}$ is trace class and there exists $x_0>0$ such that for all $x>x_0$, we have $\Vert R_{(x;t)}\Vert <1$; hence $I\pm R_{(x;t)}$ are invertible. (Similar results hold for complex $h$ by polarization.)\par 
\indent Note that due to the special form of the linear system
\begin{align}\label{Rt} 
   -{\frac{\partial R_{(x;t)}}{\partial t}}
   &= A^{-1}R_{(x;t)}+R_{(x;t)}A^{-1}\nonumber\\
   &= A^{-1}(AR_{(x;t)}+R_{(x;t)}A)A^{-1}\nonumber\\
   &= -A^{-1}{\frac{\partial R_{(x;t)}}{\partial x}}A^{-1}\nonumber\\
   &= A^{-1}e^{-xA}B_tC_te^{-xA}A^{-1}
\end{align}
gives an operator of rank one so
\begin{align*} 
   {\frac{\partial S}{\partial t}}
    &= {\trace}\Bigl( (I+R_{(x;t)})^{-1}
           {\frac{\partial R_{(x;t)}}{\partial t}}
           +(I-R_{(x;t)})^{-1} {\frac{\partial R_{(x;t)}}{\partial t}}\Bigr)\\
    &= C_te^{-xA}A^{-1} (I-R_{(x;t)}^2)^{-1}A^{-1}e^{-xA}B_t.
\end{align*}
We write $2S(x;t)=\int_x^\infty \psi (s;t)ds$, 
where $\psi (x; t)=-4V(x,x)$ as in (\ref{S'}). By (\ref{V'}), we have
\begin{equation}\label{psix}
   2{\frac{\partial \psi}{\partial x}}
      =-16\lfloor A\rfloor -\psi^2
\end{equation}
so by integrating with respect to $x$ we deduce
  \[ -2\psi (x;t)=8{\frac{\partial }{\partial x}}\log\det (I+R_{(x;t)})-\int_x^\infty \psi (y;t)^2 \,dy, \]
which implies when we differentiate with respect to $t$ and multiply by $\psi$, that
\begin{equation}\label{stage1}
  -2\psi {\frac{\partial\psi}{\partial t}}
  = 8\psi {\frac{\partial^2}{\partial x\partial t}}
       \log\det (I+R_{(x;t)})
       -\psi (x;t) \int_x^\infty {\frac{\partial }{\partial t}} \psi(y;t)^2 \, dy.
\end{equation}
We also have
\begin{align}\label{psit} 
  {\frac{\partial \psi}{\partial t}}
   &= 2\lfloor A^{-1}(I+R_{(x;t)})\rfloor  \lfloor      (I+R_{(x;t)})A^{-1}\rfloor\nonumber\\
     &\qquad +2\lfloor (I-R_{(x;t)})^{-1}A^{-1}(I+R_{(x;t)})\rfloor\lfloor (I+R_{(x;t)})A^{-1}(I-R_{(x;t)})^{-1}\rfloor\nonumber\\
   &\qquad -4\lfloor (I-R_{(x;t)})^{-1}A^{-1}(I+R_{(x;t)})\rfloor-4\lfloor (I+R_{(x;t)}) A^{-1}(I-R_{(x;t)})^{-1}\rfloor,
\end{align}
and from (\ref{Rt}), 
\begin{align}\label{At}
   {\frac{\partial \lfloor A\rfloor}{\partial t}}
    &=  \lfloor A^{-1}(I+R_{(x;t)})\rfloor
        \lfloor (I+R_{(x;t)}) A^{-1}(I+R_{(x;t)})^{-1}A\rfloor\nonumber\\
  &\qquad +\lfloor A(I+R_{(x;t)})^{-1}A^{-1}(I+R_{(x;t)})\rfloor\lfloor   (I+R_{(x;t)})A^{-1}\rfloor\nonumber\\
  &\qquad-\lfloor (I+R_{(x;t)})A^{-1}(I+R_{(x;t)})^{-1}A\rfloor-\lfloor A(I+R_{(x;t)})^{-1}A^{-1}(I+R_{(x;t)})\rfloor .
\end{align}
Also, from (\ref{psix}) we have 
\begin{equation}\label{stage2}
   {\frac{\partial^2\psi}{\partial t\partial x}}
   =-8 {\frac{\partial \lfloor A\rfloor}{\partial t}}-\psi {\frac{\partial\psi}{\partial t}}.
\end{equation}
We need to combine  the equations (\ref{stage1}) and (\ref{stage2}) and thereby obtain the sinh-Gordon equation. 
The expression (\ref{psit}) involves factors $I-R$; whereas (\ref{At}) involves only $I+R$; we reconcile these by multiplying by $\psi =4\lfloor (I+R)/(I-R)\rfloor$, which cancels out the factors of $I-R$. 
From (\ref{Rt}), we have
\begin{align}
  {\frac{\partial^2}{\partial x\partial t}}\log\det (I+R_{(x;t)})
  &= -\lfloor A^{-1}(I+R_{(x;t)})\rfloor\lfloor (I+R_{(x;t)})A^{-1}\rfloor +\lfloor A^{-1}(I+R_{(x;t)})\rfloor\nonumber\\
  &+\lfloor (I+R_{(x;t)})A^{-1}\rfloor;
\end{align}
hence from the $\ast$ product (\ref{ast}) we obtain
\begin{align} 
    \psi &{\frac{\partial^2}{\partial x\partial x}}\log \det (I+R_{(x;t)})\nonumber\\
  &=-4\bigl\lfloor (I+R_{(x;t)})A^{-1}\bigr\rfloor\bigl\lfloor (I+R_{(x;t)})(I-R_{(x;t)})^{-1}\bigr\rfloor \bigl\lfloor A^{-1}(I+R_{(x;t)})\rfloor \nonumber\\
  &\qquad + 4\bigl\lfloor (I+R_{(x;t)})A^{-1}\bigr\rfloor\bigl\lfloor (I+R_{(x;t)})(I-R_{(x;t)})^{-1}\bigr\rfloor \nonumber\\
  &\qquad\qquad 
     +4\bigl\lfloor (I+R_{(x;t)})(I-R_{(x;t)})^{-1}\bigr\rfloor
      \bigl\lfloor A^{-1}(I+R_{(x;t)})A^{-1}\bigr\rfloor\nonumber\\
  &= -4\bigl\lfloor (I+R_{(x;t)})(I-R_{(x;t)})^{-1} -(I+R_{(x;t)})A^{-1}(I+R_{(x;t)})^{-1}A\bigr\rfloor\bigl\lfloor A^{-1}(I+R_{(x;t)})\bigr\rfloor\nonumber\\
  &\qquad +\psi - 4\bigl\lfloor (I+R_{(x;t)})A^{-1}(I+R_{(x;t)})^{-1}A\bigr\rfloor +\psi -4\bigl\lfloor A(I+R_{(x;t)})^{-1}A^{-1}(I+R_{(x;t)})\bigr\rfloor
\end{align}
hence we obtain
\begin{align} 
  \psi 
   &{\frac{\partial^2}{\partial x\partial x}}\log \det (I+R_{(x;t)})
       \nonumber\\ 
   &= 4\bigl\lfloor (I+R_{(x;t)})A^{-1}(I+R_{(x;t)})^{-1}A\bigr\rfloor\bigl\lfloor A^{-1}(I+R_{(x;t)})\bigr\rfloor\nonumber\\
   &\quad   -4\bigl\lfloor (I+R_{(x;t)})(I-R_{(x;t)})^{-1}\bigr\rfloor\bigl\lfloor A^{-1}(I+R_{(x;t)})\bigr\rfloor\nonumber\\
  &\qquad +\psi - 4\bigl\lfloor (I+R_{(x;t)})A^{-1}(I+R_{(x;t)})^{-1}A\bigr\rfloor +\psi -4\bigl\lfloor A(I+R_{(x;t)})^{-1}A^{-1}(I+R_{(x;t)})\bigr\rfloor\nonumber\\
  &= 4\bigl\lfloor (I+R_{(x;t)})A^{-1}(I+R_{(x;t)})^{-1}A\bigr\rfloor\bigl\lfloor A^{-1}(I+R_{(x;t)})\bigr\rfloor\nonumber\\
  &\qquad +4\bigl\lfloor A(I+R_{(x;t)})^{-1}A^{-1}(I+R_{(x;t)})\bigr\rfloor-\psi\nonumber\\
  &\qquad +\psi - 4\bigl\lfloor (I+R_{(x;t)})A^{-1}(I+R_{(x;t)})^{-1}A\bigr\rfloor +\psi -4\bigl\lfloor A(I+R_{(x;t)})^{-1}A^{-1}(I+R_{(x;t)})\bigr\rfloor\nonumber\\
  &= \psi +4\bigl\lfloor (I+R_{(x;t)})A^{-1}(I+R_{(x;t)})^{-1}A\bigr\rfloor\bigl\lfloor A^{-1}(I+R_{(x;t)})\bigr\rfloor\nonumber\\
  &\qquad  -4\bigl\lfloor (I+R_{(x;t)})A^{-1}(I+R_{(x;t)})^{-1}A\bigr\rfloor\nonumber\\
  &= \psi +2{\frac{\partial}{\partial t}\bigl\lfloor A\bigr\rfloor} .
\end{align}
Hence (\ref{stage1}) gives
\begin{equation}
  -2\psi {\frac{\partial\psi}{\partial t}}
   = 16{\frac{\partial }{\partial t}}\bigl\lfloor A\bigr\rfloor +8\psi -\psi (x;t)\int_x^\infty {\frac{\partial \psi^2}{\partial t}} \, dy
\end{equation}
so (\ref{stage2}) becomes
  \[ 2{\frac{\partial^2\psi}{\partial x\partial t}}
  = 8\psi -2\psi (x;t) \int_x^\infty \psi (y;t){\frac{\partial \psi (y;t)}{\partial t}} \, dy, \]
so
\begin{equation}\label{stage3}
  -{\frac{\partial \psi}{\partial t}}
      +\int_x^\infty \psi (y;t)\int_y^\infty
         \psi (v;t){\frac{\partial \psi (v;t)}{\partial t}} \, dvdy
   = 4\int_x^\infty \psi (y;t)\, dy,
\end{equation}
so we need to compare the integral in (\ref{stage3}) with the $\sinh 2S$ term in (\ref{sinhGordon}).

We introduce the integration operator $F_\psi: L^2(0, \infty )\rightarrow L^2(0,\infty )$, which depends upon $t>0$ by
  \[ F_\psi f(x;t)=\int_x^\infty \psi (s;t)f(s) \, ds.\]
Choosing $f=1$, we obtain by induction the formula
  \[ F_\psi ^{n}\circ 1 (x;t)
  = {\frac{1}{n!}}\Bigl(\int_x^\infty \psi (s;t)ds\Bigr)^{n}
  = {\frac{1}{n!}}(F_\psi \circ 1)^n(x;t).\]
Now we express the right-hand side of (\ref{sinhGordon}) as a composition of operators 
\begin{align*}
  \sinh 2S 
  &= \sum_{n=0}^\infty {\frac{2^{2n+1}S^{2n+1}}{(2n+1)!}}
    = \sum_{n=0}^\infty  {\frac{(F_\psi\circ 1)^{2n+1}}{(2n+1)!}}\\
  & =\sum_{n=0}^\infty F_\psi^{2n+1}\circ 1 (x;t)  
     = (I-F_\psi^2)^{-1}F_\psi\circ 1 (x;t).
\end{align*}
Hence the equation (\ref{stage3}) gives
  \[ -{\frac{\partial \psi}{\partial t}}=4(I-F_\psi^2)^{-1}F_\psi\circ 1 (x;t) \]
which gives
  \[ 2{\frac{\partial^2 S}{\partial t\partial x}}=4\sinh 2S.\]
which is the sinh-Gordon equation.
\end{proof}

\begin{cor}\label{detphase} Let $\phi_{(x;t)}(y)=\phi (y+2x;t)$, with $\phi$ as in Theorem \ref{sinhGordontheorem} for (i) and (ii). 
\begin{enumerate}
\item[(i)] Then
\begin{equation}\label{Sphasegamma} S(x;t)=\log \det (I+\Gamma_{\phi_{(x;t)}})-\log\det (I-\Gamma_{\phi_{(x;t)}})\end{equation}
satisfies the sinh-Gordon equation (\ref{sinhGordon}). 
\item[(ii)] The function $\varphi (z;t)=\log\det (I-\Gamma_{\phi_{(z;t)}}^2)$ is holomorphic on $\{ z: \Re z>x_0\}$ for some $x_0>0$ and $\varphi (z;t)\rightarrow 0$ as $\Re z\rightarrow\infty$. 
\item[(iii)] Now let
$$\phi (x;s/2)=\int_0^\infty e^{-xy-s/y}dy\qquad (\Re x>0, s>0). $$
Then the conclusion of (ii) holds for the associated Hankel operator $\Gamma_{\phi_{(z;t)}}$.
\end{enumerate}
\end{cor}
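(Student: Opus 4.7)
The plan is to reduce (i) to Theorem \ref{sinhGordontheorem} via a standard determinant identity, to prove (ii) by analytic perturbation of the Fredholm determinant, and to verify that the specific example in (iii) fits this framework despite a mild degeneracy.

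For (i), introduce the controllability and observability operators $\Xi_x : L^2((x,\infty); \Cb) \to H$ and $\Theta_x : L^2((x,\infty); \Cb) \to H$ by $\Xi_x f = \int_x^\infty e^{-uA} B f(u) \, du$ and $\Theta_x g = \int_x^\infty e^{-uA^\dagger} C^\dagger g(u) \, du$, so that $R_{(x;t)} = \Xi_x \Theta_x^\dagger$ as in (\ref{R}). Under the unitary translation $U_x : L^2((0,\infty); \Cb) \to L^2((x,\infty); \Cb)$, $(U_x g)(y) = g(y - x)$, a direct computation gives $U_x^* \Theta_x^\dagger \Xi_x U_x = \Gamma_{\phi_{(x;t)}}$. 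The trace-class identity $\det(I \pm \Xi_x \Theta_x^\dagger) = \det(I \pm \Theta_x^\dagger \Xi_x)$ then yields
$$\det(I \pm \Gamma_{\phi_{(x;t)}}) = \det(I \pm R_{(x;t)}),$$
so the $S(x;t)$ of (\ref{Sphasegamma}) coincides with the $S(x;t)$ of (\ref{Sphase}), and (i) follows from Theorem \ref{sinhGordontheorem}.

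For (ii), factor $I - \Gamma_{\phi_{(z;t)}}^2 = (I - \Gamma_{\phi_{(z;t)}})(I + \Gamma_{\phi_{(z;t)}})$, so that $\varphi(z;t) = \log\det(I + \Gamma_{\phi_{(z;t)}}) + \log\det(I - \Gamma_{\phi_{(z;t)}})$. The kernel
$$\phi(u + v + 2z; t) = \int_0^\infty e^{-(u+v+2z)y - 2t/y} h(y)^2 \, dy$$
extends holomorphically in $z$ to the right half-plane $\Re z > 0$, and by dominated convergence the trace norm $\Vert \Gamma_{\phi_{(z;t)}}\Vert_{\calL^1}$ tends to zero as $\Re z \to \infty$. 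Hence for $\Re z > x_0$ sufficiently large, $\Vert \Gamma_{\phi_{(z;t)}}\Vert < 1$, so both Fredholm determinants are nonzero and holomorphic in $z$, and $\varphi(z;t) \to 0$ as $\Re z \to \infty$.

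For (iii), the choice $h \equiv 1$ does not satisfy the hypothesis $h(y)/\sqrt{y} \in L^2(0,\infty)$ used to define (\ref{SG-linear-sys}), so a direct check is required. One verifies that $\phi(x;t) = \int_0^\infty e^{-xy - 2t/y} \, dy$ is finite for $\Re x > 0$, $t > 0$, and decays exponentially in $\Re x$; the corresponding Howland-form kernel (\ref{Rhowland}) with $h \equiv 1$ yields $\trace R_{(x;t)} = \int_0^\infty \frac{1}{2y} e^{-2xy - 2t/y} \, dy < \infty$, since the factor $e^{-2t/y}$ kills the singularity at $y = 0$. Hence $R_{(x;t)}$ and $\Gamma_{\phi_{(x;t)}}$ are trace class, and the argument of (ii) applies verbatim. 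The main obstacle across the three parts is verifying the intertwining $U_x^* \Theta_x^\dagger \Xi_x U_x = \Gamma_{\phi_{(x;t)}}$ in (i); once that is in place, parts (ii) and (iii) reduce to routine complex-analytic and kernel estimates.
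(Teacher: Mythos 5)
Your part (i) is essentially the paper's argument: both rest on the commutation identity $\det(I\pm\Theta_x^\dagger\Xi_x)=\det(I\pm\Xi_x\Theta_x^\dagger)$ to identify (\ref{Sphasegamma}) with (\ref{Sphase}) and then invoke Theorem \ref{sinhGordontheorem}; your explicit intertwining unitary $U_x$ just makes the identification of $\Theta_x^\dagger\Xi_x$ with $\Gamma_{\phi_{(x;t)}}$ more careful than the paper bothers to be. For (ii) and (iii) you take a genuinely different route. The paper controls the \emph{Hilbert--Schmidt} norm of $\Gamma_{\phi_{(z;t)}}$ by a Cauchy--Schwarz estimate on the scattering function, $\Vert\Gamma_{\phi_{(z;t)}}\Vert_{\mathcal{L}^2}^2=\int_0^\infty x|\phi_{(z;t)}(x)|^2\,dx$, which is exactly what is needed for $\det(I-\Gamma^2)$ and which works uniformly for complex $z$; in (iii), where the hypothesis $h(y)/\sqrt{y}\in L^2$ fails for $h\equiv 1$, the paper substitutes the closed form $\phi(x;s/2)=\sqrt{4s/x}\,K_1(2\sqrt{sx})$ and elementary bounds on $K_1$ to recover the same Hilbert--Schmidt estimate. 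You instead factor $\det(I-\Gamma^2)=\det(I+\Gamma)\det(I-\Gamma)$ and work with the trace norm, checking trace-class directly from the Howland kernel (\ref{Rhowland}); your verification in (iii) that $\trace R_{(x;t)}=\int_0^\infty\tfrac{1}{2y}e^{-2xy-2t/y}\,dy<\infty$ is more direct than the Bessel-function detour, though note that the failure of $\int_0^\infty h(y)^2\,dy/y$ for $h\equiv 1$ comes from \emph{both} endpoints, and it is the factor $e^{-2xy}$, not only $e^{-2t/y}$, that rescues the integral.

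The one step you should shore up is the assertion that $\Vert\Gamma_{\phi_{(z;t)}}\Vert_{\mathcal{L}^1}\to 0$ as $\Re z\to\infty$ ``by dominated convergence.'' For real $z$ this is fine, because $R_{(x;t)}$ is positive (the Schur-product argument in the proof of Theorem \ref{sinhGordontheorem}), so its trace norm equals its trace and dominated convergence applies to the trace integral. For complex $z$ the operator is no longer self-adjoint, trace and trace norm need not coincide, and dominated convergence on the trace says nothing about the trace norm; yet holomorphy of $\varphi(z;t)$ and the decay as $\Re z\to\infty$ are claimed on a half-plane. You can repair this either by the paper's Cauchy--Schwarz Hilbert--Schmidt estimate (which bounds everything in terms of $\Re z$ and suffices for $\det(I-\Gamma^2)$ without factoring), or by bounding $\Vert\Gamma_{\phi_{(z;t)}}\Vert_{\mathcal{L}^1}\le\Vert\Theta_z\Vert_{\mathcal{L}^2}\Vert\Xi_z\Vert_{\mathcal{L}^2}$, where the Hilbert--Schmidt norms of the controllability and observability factors depend only on $\Re z$. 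As written, the complex-$z$ case of (ii) is a gap, albeit an easily fillable one.
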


\begin{proof} (i) We have
$$\det (I+\lambda \Gamma_{\phi_{(x;t)}})=\det (I+\lambda \Theta_x^\dagger\Xi_x)=\det (I+\lambda \Xi_x\Theta_x^\dagger)=\det (I+\lambda R_{(x;t)})\qquad (\lambda\in {\mathbb C}),$$
so the formulas (\ref{Sphase}) and (\ref{Sphasegamma}) for $S(x;t)$ are equivalent, and the result holds by Theorem \ref{sinhGordontheorem}. See also \cite{TW2}.

\indent (ii) By Cauchy-Schwarz inequality
$$\vert \phi_{(z;t)}(x)\vert^2\leq \Vert h\Vert^2_\infty\int_0^\infty e^{-xy-4t/y}\vert h(y)\vert^2 dy\int_0^\infty e^{-(x+4\Re z)y}dy;$$
hence the Hilbert--Schmidt norm satisfies
\begin{align} \Vert \Gamma_{\phi_{(z;t)}}\Vert^2_{{\mathcal L}^2}&=\int_0^\infty x\vert \phi_{(z;t)}(x)\vert^2 dx\nonumber\\
&\leq \Vert h\Vert^2_\infty\int_0^\infty\Bigl\{  \int_0^\infty {\frac {xy}{x+ 4\Re z}}e^{-xy} dx \Bigr\}{\frac{e^{-4t/y}}{y}}\vert h(y)\vert^2 dy\end{align}
where the inner integral is bounded and converges to zero as $\Re z\rightarrow\infty$, so $\Vert \Gamma_{\phi_{(z;t)}}\Vert^2_{{\mathcal L}^2}\rightarrow 0$ by the dominated convergence theorem, hence the result. 

(iii) The conclusion of Corollary \ref{detphase}(ii) holds, but the hypotheses are not quite satisfied, so we provide a special argument. Let \begin{equation}\label{Kdef}
   K_1(z) = \int_0^\infty \exp (-z\cosh u)\cosh u \,du
        \qquad (\Re z>0)
\end{equation}
be the modified Bessel function of the third kind of order $1$, also known as MacDonald's function of the second kind, which is holomorphic on $\{ z\: \Re z>0\}$ with $\vert K_1(z)\vert\leq K_1(\Re z)$. This also satisfies
$$ {\frac{e^{-t}}{ t}}\leq K_1(t)\leq \sqrt{ {\frac{\pi}{2t}}} e^{-t} +{\frac{e^{-t}}{t}}\qquad (t>0)$$
by simple estimates following from $\cosh u=1+2\sinh^2(u/2)$. 
Then for $\alpha=0$, we have
  \begin{align}
    \phi (x;s/2)&=\sqrt{ {\frac{4s}{x}}} K_1(2\sqrt{sx})\end{align}
so $\int_0^\infty x\vert \phi (x+2x_0;s/2)\vert^2dx$ is finite for all $x_0,s>0$ and we can deduce $\Vert \Gamma_{\phi_{(x_0)}}\Vert_{{\mathcal L}^2}\rightarrow 0$ as $x_0\rightarrow\infty$, as in Corollary \ref{detphase}(ii).\par 
\end{proof}


%
%

\section{Hankel determinants}\label{S:Hankeldeterminants}

The Wishart ensemble is a standard model in random matrix theory \cite[page 91]{Fo} which produces the Laguerre ensemble of random eigenvalues on $(0, \infty )$. The Laguerre ensemble is in turn used as a model in the theory of wireless transmission, and in an integral model of quantum field theory at finite temperature \cite{CI}. The $KP$, $KdV$ and sinh-Gordon differential equation can be interpreted as aspects of a single hierarchy.

One can produce particular solutions to such differential equations in terms of the Painlev\'e transcendental functions. Chen and Its \cite{CI} considered the singularly perturbed Laguerre weight $y^\alpha e^{-y-s/y}$ for $y>0$ and $\alpha >0$. Let $h(y)=y^{\alpha/2}$, and observe that 
  \begin{equation}\label{scattering} \phi (x;s/2)=\int_0^\infty y^\alpha e^{-xy-s/y} \, dy\end{equation}
is a moment generating function for moments of this weight, so we have
 \[ (-1)^{j+k}\Bigl({\frac{\partial^{j+k}}{\partial x^{j+k}}}\Bigr)_{x=1}
    \phi (x;s/2)
    =\int_0^\infty y^{\alpha +j+k}e^{-y-s/y} \,dy. \]
Then the corresponding Hankel determinants are given by 
\begin{equation}\label{Hankeldet} D_n(s)
    =\det\Bigl[ \int_0^\infty y^{\alpha +j+k}e^{-y-s/y} \,dy \Bigr]_{j,k=0}^{n-1}
     \qquad (s>0; n=1,2, \dots), \end{equation} 
which turns out to be the isomonodromic tau function for a particular sequence of solutions of Painlev\'e $\text{III}'$ differential equation. This terminology refers to the work of Okamoto.  
\begin{ex} Let $s=0$ and $\alpha>0$, let $G(z)$ be Barnes $G$-function such that $G(0)=1$ and $G(z+1)=\Gamma (z)G(z)$, where $\Gamma (z)$ is Euler's gamma function. Then by (1.19) of \cite{CI}
$$D_n(0)={\frac{G(n+1)G(n+\alpha +1)}{G(\alpha +1)}}.$$
\end{ex}
Let $\alpha =0$. By Andr\'eief's identity \cite{F}, we have
  \begin{multline}
  \det\bigl[ (-1)^{j+k-2}K_1^{(j+k-2)}(t)\bigr]_{j,k=1}^n\\
     ={\frac{1}{n!}} \int_0^\infty\dots \int_0^\infty 
        \prod_{1\leq j<k\leq n} (\cosh u_j-\cosh u_k)^2 
        \prod_{j=1}^n e^{-t\cosh u_j}\cosh u_j \, du_j
  \end{multline}
with $K_1$ as in (\ref{Kdef}). With the change of variable $x_j=2/(1+\cosh u_j)$, this may be written as
   \[ {\frac{1}{n!}} \int_0^\infty\dots \int_0^\infty
     \exp\Bigl( -t\Bigl({\frac{2}{x_j}}-1\Bigr) \Bigr) 
     \prod_{j=1}^n {\frac{x_j^{-2n}(2-x_j)}{\sqrt{1-x_j}}}
     \prod_{1\leq j<k\leq n}(x_j-x_k)^2\prod_{j=1}^n dx_j .
     \]
This is associated with the generalized unitary ensemble for the scalar potential
    \begin{equation}\label{potential}
    u_n(x) = t\Bigl( {\frac{2}{x}}-1\Bigr) +2n\log x+(1/2)\log (1-x) -\log (2-x),
    \end{equation}
with
    \begin{equation} u_n'(x) = {\frac{-2t}{x^2}} +{\frac{2n}{x}} -{\frac{1/2}{1-x}}+{\frac{1}{2-x}},
    \end{equation}
and
    \begin{equation} u_n''(x) = {\frac{4t}{x^3}} +{\frac{-2n}{x^2}} -{\frac{1/2}{(1-x)^2}}+{\frac{1}{(2-x)^2}},
    \end{equation}
    
Dyson introduced a technique for funding the asymptotics of such determinants, which has been refined by Chen \cite[page 4603]{CM} and others into the Coulomb fluid method. In this, the point distribution $n^{-1}\sum_{j=1}^n \delta_{x_j}$ of the $x_j$ is approximated in the weak topology on probability measures as $n\rightarrow\infty$ by a continuous distribution $\rho$ which is found by potential theory. We write
  \[ \HT \rho (x) = \pv\int_{-\infty}^\infty {\frac{\rho (y)}{x-y}}{\frac{dy}{\pi}} \]
for the Hilbert transform of $\rho$. 

Fix $\xi \in (0,1)$ and write $t$ for $2n \xi$.
The zeros of $u_n'(x)$ may be approximated by the zeros of $h(x)=(2n-1/2)(x-1)(x-2)(x-t/n)$ by Rouch{\'e}'s theorem, so we locate a real zero of $u_n'(x)$ near to $2\xi$. Note that 
  \[ {\frac{u_n''(2\xi )}{n}}
  = {\frac{1}{2\xi^2}}-{\frac{1}{2n(1-2\xi )^2}}
         +{\frac{1}{n(2-\xi )^2}}\] 
is positive for all $0<\xi <1/2$ and all sufficiently large $n$, so $u_n$ is convex near to this minimizer. 
For a continuous $V:[0, 1]\rightarrow {\mathbb R}$, we introduce the energy functional
   \begin{equation} 
   E_{V}(\rho )
   =\Bigl\{ \int_0^1 V(x)\rho (x) \, dx
     + \int_0^1\int_0^1 \log {\frac{1}{\vert x-y\vert}} \rho (x)\rho (y) \, dxdy \Bigr\},
   \end{equation}
   where $V$ indicates an electric field and the $\log \vert x-y\vert$ term involves electrostatic interaction between points $x,y\in [0,1]$, where the charge distribution is $\rho$. The equilibrium distribution is defined to be the minimizer of this energy functional; see \cite{ST}.
   
We consider the approximate scaled potential that is given by the first two summands in (\ref{potential}), namely
   \begin{equation}\label{approxpotential}
     u_0(x) =2\xi \Bigl( {\frac{2}{x}}-1\Bigr) +2\log x,
   \end{equation}
with
  \[u_0'(x) ={\frac{-2\xi}{x^2}}+{\frac{2}{x}}; \quad u_0''(x)={\frac{8\xi }{x^3}}-{\frac{2}{x^2}}\qquad (0<x<1)\]
which has a local minimum at $x=2\xi$. 

\begin{prop} 
\begin{enumerate}
\item[(i)] The equilibrium distribution for $u_0$ is $\sigma_0$, where 
\[
  \sigma_0(x)
    = {\frac{\sqrt{(b-x)(x-a)}}{\pi \sqrt{ab}}}
        \Bigl( {\frac{\xi (a+b)}{abx}} +{\frac{2\xi}{x^2}}-{\frac{1}{x}}\Bigr)
        \qquad (a<x<b),
  \]
is supported on $(a,b)$, where 
   \[ a,b
     = {\frac{4\pi\xi}{(2\pi -1)^2}}\bigl( 2\pi \pm \sqrt{4\pi -1}\bigr).
   \]
\item[(ii)] For $1/4<\xi<1/2$, the free logarithmic Sobolev inequality 
   \begin{equation}
   E_{u_0}(p )-E_{u_0}(\sigma_0)
     \leq {\frac{2}{8\xi -2}}\int_a^b (2\pi \HT p(x)-u_0'(x))^2p(x) \,dx.
  \end{equation}
holds for all probability density functions $p\in L^3(a,b)$.
\item[(iii)] 
Let $\rho_n$ be the minimizer of $E_{u_n/n}(\rho )$ over all continuous $\rho$ such that $\rho \geq 0$ and $\int_0^1 \rho (x)dx=1$.
Suppose that $\rho_n$ has a continuous density which is supported in a single interval. Then $\rho_n$ converges weakly as $n\rightarrow\infty$ to $\sigma_0$.  
\end{enumerate}
\end{prop}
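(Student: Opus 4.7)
For (i), the approach is via the standard Euler--Lagrange characterization of weighted equilibrium measures on a finite interval \cite{ST}: the unique minimizer $\sigma_0$ of $E_{u_0}$ satisfies
 \[
   u_0(x)-2\int_a^b\log\vert x-y\vert\,\sigma_0(y)\,dy=\ell \qquad(x\in[a,b])
 \]
with the equality replaced by $\geq$ off the support, so that differentiating on $(a,b)$ gives the singular integral equation $2\pi\HT\sigma_0(x)=u_0'(x)=2/x-2\xi/x^2$. I would solve this by the scalar Riemann--Hilbert method, setting $G(z)=\int_a^b\sigma_0(y)(z-y)^{-1}\,dy$ and seeking $G$ holomorphic on $\Cb\setminus[a,b]$, with $G(z)\sim 1/z$ at infinity and boundary sum $G_+(x)+G_-(x)=u_0'(x)$ on $(a,b)$. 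The ansatz
 \[
   G(z)=\tfrac12 u_0'(z) -\tfrac12 r(z) \sqrt{(z-a)(z-b)},
 \]
with $r(z)$ a rational function whose only finite pole is at $z=0$ chosen to cancel the double pole of $u_0'/2$ there, together with the large-$z$ normalization, yields a coupled polynomial system for the coefficients of $r$ and the endpoints $a,b$; the density is then recovered as $\sigma_0(x)=(G_-(x)-G_+(x))/(2\pi i)$, producing the claimed closed form, and the positive-root selection from the resulting quadratic gives the stated expressions for $a$ and $b$.

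For (ii), I would invoke a free logarithmic Sobolev inequality of Hiai--Petz--Ueda type: if $V\in C^2([a,b])$ with equilibrium measure supported in $[a,b]$ and $V''(x)\geq c>0$ on $[a,b]$, then for any probability density $p$ with support in $[a,b]$,
 \[
   E_V(p)-E_V(\sigma_V)\leq \frac{1}{2c}\int_a^b \bigl(V'(x)-2\pi\HT p(x)\bigr)^2 p(x)\,dx.
 \]
The standard proof uses displacement convexity along the monotone transport $T:\sigma_V\to p$ combined with the rearrangement inequality
 \[
   \iint\log\vert T(x)-T(y)\vert\,d\sigma_V(x)\,d\sigma_V(y)
    \leq \iint\log\vert x-y\vert\,d\sigma_V(x)\,d\sigma_V(y).
 \]
For $V=u_0$ we have $u_0''(x)=(8\xi-2x)/x^3$, which is monotone decreasing in $x$ on the relevant range, so $\inf_{[a,b]}u_0''=u_0''(b)$; substituting the explicit value of $b$ from (i) reduces to the identity $u_0''(b)=4\xi-1$ for $\xi\in(1/4,1/2)$, which gives $2c=8\xi-2$ as required.

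For (iii), the rescaled potentials satisfy $u_n(x)/n=u_0(x)+O(1/n)$ uniformly on compact subsets of $(0,1)$, so $E_{u_n/n}$ $\Gamma$-converges to $E_{u_0}$ on the weakly compact set of probability measures on $[0,1]$. Strict convexity of the logarithmic-energy part together with the single-interval hypothesis forces each $\rho_n$ to be the unique minimizer; a tightness argument then shows any weak subsequential limit minimizes $E_{u_0}$ and hence coincides with $\sigma_0$ by (i), yielding $\rho_n\rightharpoonup\sigma_0$. The main obstacle I expect is the algebraic bookkeeping in (i), namely verifying that the holomorphy-at-$z=0$ and large-$z$ normalization conditions collapse to a single quadratic whose positive roots give precisely the stated closed forms $a,b=4\pi\xi(2\pi\pm\sqrt{4\pi-1})/(2\pi-1)^2$; a secondary technical point is checking the identity $u_0''(b)=4\xi-1$ used in (ii).
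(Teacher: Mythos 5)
Your part (i) is correct but follows a different (equivalent) route from the paper: you set up the resolvent $G(z)$ and solve a scalar Riemann--Hilbert problem with the ansatz $G=\tfrac12 u_0'-\tfrac12 r\sqrt{(z-a)(z-b)}$, whereas the paper applies the explicit finite-Hilbert-transform inversion formula
$\sigma_0(x)=\frac{\sqrt{(b-x)(x-a)}}{2\pi^2}\int_a^b\frac{u_0'(x)-u_0'(y)}{x-y}\frac{dy}{\sqrt{(b-y)(y-a)}}$
together with the two moment conditions $\int_a^b u_0'(x)\,dx/\sqrt{(b-x)(x-a)}=0$ and $\int_a^b xu_0'(x)\,dx/\sqrt{(b-x)(x-a)}=1$, evaluated by the substitution $y=a+(b-a)\sin^2\phi$; both are standard and both reduce to the same quadratic for $a,b$ via $a+b$ and $ab$. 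Your part (iii) via $\Gamma$-convergence is a softer but serviceable substitute for what the paper actually does, which is more quantitative: it writes $u_n=nu_0+f$ with $f(x)=\tfrac12\log(1-x)-\log(2-x)$ and computes the $O(1/n)$ correction $\tilde\rho$ to the density explicitly by the same inversion formula, using the tabulated integrals in the appendix to \cite{CM}.

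Part (ii) contains a genuine gap. You reduce the convexity constant to the claimed identity $u_0''(b)=4\xi-1$, but this is not an identity: $u_0''(x)=8\xi/x^3-2/x^2$, and evaluating at the algebraic expression for $b$ from (i) does not produce $4\xi-1$ for $\xi\in(1/4,1/2)$ (a numerical check at $\xi=1/2$ already refutes it). The identity is also unnecessary. The paper's constant comes from the uniform bound $u_0''(x)\geq 8\xi-2$ on the whole interval $(0,1)$ containing the support --- the infimum of $8\xi/x^3-2/x^2$ over $0<x<1$ is attained as $x\to1$ and equals $8\xi-2$, which is positive exactly when $\xi>1/4$ --- after which Theorem 3.1 of \cite{BP} gives the inequality with constant $2/(8\xi-2)$ directly. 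If you replace your endpoint evaluation by this uniform lower bound (valid since $p$ is supported in $(a,b)\subset(0,1)$), your transport/displacement-convexity argument goes through and matches the paper's citation of Biane.
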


\begin{proof} (i) By standard results \cite{ST} Theorem 1.3 and p. 215, there exists a unique continuous probability density function on $[0,1]$ that attains the minimum of $E_V(\rho )$ for continuous $V$.  
We aim to solve the singular integral problem 
   \[ u_0'(x)=\pv \int_a^b {\frac{2\sigma_0 (y)}{x-y}} \,dy 
      = 2\pi \HT \sigma_0(x) \]
for the equilibrium density $\sigma_0$ subject to the constraint $\int_a^b \sigma_0 (y) \,dy=1$. Since $u_0$ is convex, the solution is continuous and positive on a single interval 
$(a,b)$, where $0<a<b<1$, and we have
   \begin{align}\label{equilibdensity} 
   \sigma_0 (x)
   &={\frac{\sqrt{(b-x)(x-a)}}{2\pi^2}}
      \int_a^b {\frac{u_0'(x)-u_0'(y)}{x-y}}{\frac{dy}{\sqrt{(b-y)(y-a)}}}\nonumber \\
   &={\frac{\sqrt{(b-x)(x-a)}}{2\pi^2}}
      \int_a^b \Bigl( {\frac{4\xi (x+y)}{x^2y^2}} - {\frac{2}{xy}}\Bigr)
             {\frac{dy}{\sqrt{(b-y)(y-a)}}}\nonumber  \\
   &= {\frac{\sqrt{(b-x)(x-a)}}{\pi \sqrt{ab}}}
        \Bigl( {\frac{\xi (a+b)}{abx}} +{\frac{2\xi}{x^2}}-{\frac{1}{x}}\Bigr)
        \qquad (a<x<b),
  \end{align}
where the final step follows from the substitution $y=a+(b-a)\sin^2\phi$ and some elementary integrals. The initial factor has the form of a semicircular distribution, which is modulated by a rational function with poles at $x=0$ outside of the support interval $(a,b)$. The endpoints of this interval are subject to the constraints
  \[
    0 = \int_a^b {\frac{u_0'(x)\, dx}{\sqrt{(b-x)(x-a)}}}
    = -{\frac{2\pi\xi (a+b)}{ab\sqrt{ab}}}+{\frac{2\pi}{\sqrt{ab}}},
  \]
and
  \[
 1 = \int_a^b {\frac{xu_0'(x)\, dx}{\sqrt{(b-x)(x-a)}}}
    = 2\pi -{\frac{4\pi\xi}{\sqrt{ab}}},
  \]
so that 
  \begin{equation}\label{ab}
   a,b={\frac{2\pi^2\xi}{(\pi -1/2)^2}}\mp {\frac{2\pi \xi}{\pi -1/2}}\sqrt{ {\frac{\pi^2}{(\pi -1/2)^2}}-1}.
   \end{equation}

(ii) We have $u_0''(x)=(8x\xi -2x)/x^3>8\xi -2>0$. The result follows from Theorem 3.1 of \cite{BP}. 
   
(iii) Returning to the original potential, we have $u_n=nu_0+f$ where the correction term $f(x)=(1/2)\log (1-x)-\log (2-x)$. There is a corresponding correction to the equilibrium density, $\rho_n =\sigma_0 +\tilde\rho/n$ where
   \begin{align*} 
   {\tilde\rho} (x)
   &= {\frac{1}{2\pi^2\sqrt{(b-x)(x-a)}}} 
      \pv \int_a^b {\frac{\sqrt{(b-y)(y-a)}}{y-x}} f'(y) \,dy \\
   &= {\frac{1}{2\pi^2\sqrt{(b-x)(x-a)}}} 
      \ \pv \int_a^b {\frac{\sqrt{b-y)(y-a)}}{y-x}}
           \Bigl( {\frac{-1/2}{1-y}}+{\frac{1}{2-y}}\Bigr) \,dy \\
   &= {\frac{1}{4\pi\sqrt{(b-x)(x-a)}}}
        \Bigl( {\frac{\sqrt{(1-a)(1-b)}}{1-x}} 
                    +1+{\frac{2\sqrt{(2-a)(2-b)}}{x-2}}\Bigr)
  \end{align*}
where all the roots are positive square roots and we have used the evaluation (258) in the Appendix to \cite{CM}. This has the form of an arcsine distribution, modulated by a rational function with poles outside $(a,b)$.
\end{proof}

Biane \cite{BP} interprets the tangent space to $L^2(\sigma_0)$ as a space of functions with finite Dirichlet norm, which has a natural interpretation in the present context. Let $\psi :(a,b)\rightarrow \Rb$ be a differentiable function such that 
\begin{equation} 
   \sigma (\psi )^2
   = {\frac{1}{2\pi^2}} \int_a^b {\frac{\psi (x)}{\sqrt{(b-x)(x-a)}}}
    \ \pv \int_a^b {\frac{\sqrt{(b-y)(y-x)}}{x-y}} \psi '(y) 
       \, dy dx,
\end{equation}
is finite. Then by results of \cite{CL}, the asymptotic distribution of a linear statistic $\sum_{j=1}^N \psi (x_j)$ is a Gaussian with mean $N\int_a^b \psi (x)\sigma_0(x)\,dx$ and variance
$\sigma (\psi )^2$ where $\sigma (\psi)$ depends upon $[a,b]$ but not upon the equilibrium distribution $\sigma_0$ itself. In terms of the Chebyshev polynomials, we introduce $\psi ((a+b)/2(+(b-a)t/2) = \sum_k a_k T_k(t)$ where
\[ {\frac{a_k}{2}}=\int_{-1}^1 \psi ((a+b)/2 +(b-a)t/2) T_k(t){\frac{dt}{\pi \sqrt{1-t^2}}}\]
so the variance is $\sigma (\psi )^2=\sum_{k=1}^\infty ka_k^2/4.$ Then we introduce $\phi (\theta )=\psi ((a+b)/2 +(b-a) \cos\theta /2)$ has an expansion 
  \[ \phi (\theta )=(1/2)a_0+\sum_{k=1}^\infty a_k \cos k\theta, \]
which gives a self-adjoint and Hilbert--Schmidt Hankel matrix $\Gamma_\phi =[a_{j+k}]_{j,k=0}^\infty$.

\begin{rem}
In their study of the Laguerre unitary ensemble, Forrester and Witte \cite{FW} obtain solutions of $\mathrm{PIII}'$ from similar formulas involving the modified Bessel function of the first kind $I_\nu $, 
although their Corollary 4.5 has a Toeplitz rather than a Hankel determinant.\par
\end{rem}
\begin{ex} For $s,x,\alpha >0$ in (\ref{scattering}), consider
$$v(z)=-\alpha \log z +\sqrt{sx}\Bigl(z+{\frac{1}{z}}\Bigr)\qquad (z>0).$$
By a simple scaling, this is the scalar potential that one needs to study the determinant (\ref{Hankeldet}) with weight
$y^\alpha e^{-y-s/y}$ on $(0, \infty )$. Then $v$ is convex with
\begin{equation}\label{diffvee}{\frac{v'(z)-v'(y)}{z-y}}={\frac{\alpha}{z}}+\sqrt{sx}\Bigl({\frac{z+y}{z^2y^2}}\Bigr)\end{equation}
a sum of products of rational functions of $z$ and $y$, and there exist $b>a>0$, such that the integral equation
\begin{equation}\label{inteq}v(z)=2\int_a^b \log\vert z-y\vert\rho (y) dy+C\qquad (z\in (a,b))\end{equation}
for some $C\in {\mathbb R}$ with the normalization $\rho (y)\geq 0$ and
$$\int_a^b \rho (y)\,dy=1,$$
has solution
$$\rho (z)={\frac{\sqrt{(b-z)(z-a)}}{2\pi\sqrt{ab}}}\Bigl[ {\frac{\alpha }{z}}+{\frac{\sqrt{sx}}{z^2}}+{\frac{\sqrt{sx}}{2z}}\Bigl( {\frac{1}{a}}+{\frac{1}{b}}\Bigr)\Bigr]\qquad (a<z<b)$$
by a similar argument to (\ref{equilibdensity}). We need $a>0$ to ensure that the resulting $\rho$ is integrable. The relevant integrals arise from (248) in the appendix to \cite{CM}. Let $(P_{j,N})_{j=0}^\infty$ be the monic polynomials that are orthogonal for the weight $e^{-Nv(z)}$, which gives rise to the integral equation (\ref{inteq}) multiplied through by $N$, with the normalizations preserved. Then using the results of \cite{CL}, we observe that $N\int_a^b \rho (x)\, dx$ gives the number of zeros of $P_{N,N}(z)$ and have an asymptotic formula
$$\log P_{N,N}(z)\sim N\int_a^b \log (z-y)\rho (y)dy\qquad (N\rightarrow\infty )$$
for $z\in {\mathbb C}\setminus {\mathbb R}.$ This type of double scaling is standard in random matrix theory and addresses the singularity of the weight; see section 4 of \cite{CL} for more details.
\end{ex}

\vskip0.1in
\noindent {\bf Acknowledgements.} Part of this document was written by GB during a visit to the University of Macau, supported by a visiting scholarship. The results were announced at IWOTA Chapman USA, and were partially supported by EPSRC Grant EP/T007524/1 IWOTA Lancaster 2020. The authors acknowledge helpful correspondence with S.J.A. Malham leading to the proof of Theorem \ref{sinhGordontheorem}.

\vskip0.1in
\noindent {\bf Rights Retention Statement.} This research was produced in whole or part by UNSW Sydney researchers and is subject to the UNSW Intellectual property policy. For the purposes of Open Access, the authors have applied a Creative Commons Attribution CC-BY licence to any Author Accepted Manuscript(AAM) version arising from this submission.
\vskip.01in
\noindent {\bf Declarations of interest:none}


\end{document}